  \theoremstyle{plain}
    \newtheorem{thm}{Theorem}[section]
    \newtheorem{prop}[thm]{Proposition}
   \newtheorem{lemma}[thm]{Lemma}
    \newtheorem{subsec}[thm]{}
\theoremstyle{definition}
    \newtheorem{defn}[thm]{Definition}
    \newtheorem{remark}[thm]{Remark}
\theoremstyle{remark}
\title{}
\author{}
\date{}
\title{Cohomology and deformations of dendriform coalgebras}
\author{Apurba Das}
\address{Department of Mathematics and Statistics,
Indian Institute of Technology, Kanpur 208016, Uttar Pradesh, India.}
\email{apurbadas348@gmail.com}
\subjclass[2010]{16T15, 17A30, 16E40, 18G55}
\keywords{Dendriform (co)algebra, (co)Hochschild cohomology, Formal deformation, Homotopy (co)algebra}
\begin{document}

\begin{abstract}
Dendriform coalgebras are the dual notion of dendriform algebras and are splitting of associative coalgebras. In this paper, we define a cohomology theory for dendriform coalgebras based on some combinatorial maps. We show that the cohomology with self coefficients governs the formal deformation of the structure. We also relate this cohomology with the cohomology of dendriform algebras, coHochschild (Cartier) cohomology of associative coalgebras and cohomology of Rota-Baxter coalgebras which we introduce in this paper. Finally, using those combinatorial maps, we introduce homotopy analogue of dendriform coalgebras and study some of their properties.
\end{abstract}

\maketitle

\thispagestyle{empty}

\section{Introduction}
Dendriform algebras were first introduced by Jean-Louis Loday in his study on periodicity phenomenons in algebraic $K$-theory \cite{loday}. These algebras are Koszul dual to diassociative algebras (also called associative dialgebras). More precisely, a dendriform algebra is a vector space equipped with two binary operations satisfying three new identities. The sum of the two operations turns out to be associative. Thus, a dendriform algebra can be thought of like a splitting of associative algebras. Dendriform algebras are closely related to Rota-Baxter algebras \cite{aguiar, ebr-guo}. Recently, the present author defines a cohomology and deformation theory for dendriform algebras \cite{das1}. See \cite{das-saha,fard-et, ebr-guo, ronco} and references therein for more literature about dendriform algebras.

\medskip

The dual picture of dendriform algebras is given by dendriform coalgebras. It is given by two coproducts on a vector space satisfying three identities (dual to dendriform algebra identities), and the sum of the two coproducts makes the underlying vector space into an associative coalgebra. Such algebras arise naturally from Rota-Baxter coalgebras as defined in \cite{jian-zhang}.

\medskip

In section \ref{sec-3}, we introduce representations and cohomology of dendriform coalgebras. This cohomology is based on certain combinatorial maps as defined in \cite{das1}.
The cohomology with coefficients in itself inherits a Gerstenhaber algebra structure.
 Like coHochschild (Cartier) cohomology of associative coalgebras are dual (in an appropriate sense) to Hochschild cohomology of associative algebras, the cohomology for dendriform coalgebras are dual to the cohomology of dendriform algebras as defined in \cite{das1}. We also show that there is a morphism from the cohomology of a dendriform coalgebra to the coHochschild cohomology of the corresponding associative coalgebra. We also introduce the cohomology of a Rota-Baxter operator and relate with the cohomology of the induced dendriform coalgebra.

\medskip

Formal deformation theory of associative algebras was first introduced by Gerstenhaber \cite{gers} and then extend to several other algebraic structures, such as Lie algebras, Leibniz algebras and many others \cite{nij-ric, bala, yau, das2 }. In \cite{gers-sch}, Gerstenhaber and Schack defined the same theory for associative coalgebras.
In section \ref{sec-4}, we apply the Gerstenhaber's approach of the formal deformation theory to dendriform coalgebras. The vanishing of the second cohomology of a dendriform coalgebra (with coefficients in itself) implies the rigidity of the structure. The vanishing of the third cohomology ensures that a finite order deformation can be extended to deformation of next order.

\medskip

Finally, in section \ref{sec-5}, we introduce a homotopy analogue of dendriform coalgebras, also called $\text{Dend}_\infty$-coalgebras. We show that they are splitting of $A_\infty$-coalgebras, thus, generalizing a result from the non-homotopic case. We also defined Rota-Baxter $A_\infty$-coalgebras (a  homotopy analogue of Rota-Baxter coalgebras) and show that they induce $\text{Dend}_\infty$-coalgebra structures. These results are the coalgebraic version of the results obtained in \cite{das1}.

\medskip

In Appendix (section \ref{sec-6}), we recall the coHochschild cohomology of associative coalgebras and some basics on non-symmetric operad with multiplication which have been used in the main body of the paper. All vector spaces, linear maps, tensor products are over a field $\mathbb{K}$ of characteristic zero unless stated otherwise.

\section{Dendriform (co)algebras}\label{sec-2}

In this section, we recall coHochschild cohomology of associative coalgebras and dendriform structures (algebras and coalgebras). See \cite{aguiar, das1, jian-zhang, loday, ma-liu} for more details.

%\noindent {\em coHochschild cohomology.}

\medskip

Let $(C, \triangle)$ be an associative coalgebra, i.e. $\triangle : C \rightarrow C \otimes C$ is a linear map satisfying
\begin{align*}
(\triangle \otimes \text{id}) \circ \triangle =~& (\text{id} \otimes \triangle) \circ \triangle.
\end{align*}
A bicomodule over it consists of a vector space $M$ together with linear maps $\triangle^l : M \rightarrow C \otimes M$ and $\triangle^r : M \rightarrow M \otimes C$ (called left and right coactions) satisfying
\begin{align*}
(\triangle \otimes \text{id}) \circ \triangle^l = (\text{id} \otimes \triangle^l) \circ \triangle^l, \quad
 (\triangle^l \otimes \text{id}) \circ \triangle^r = (\text{id} \otimes \triangle^r) \circ \triangle^l, \quad
 (\triangle^r \otimes \text{id}) \circ \triangle^r = (\text{id} \otimes \triangle) \circ \triangle^r.
\end{align*}

Given a bicomodule $(M, \triangle^l, \triangle^r)$ over an associative coalgebra $(C, \triangle)$, the coHochschild cohomology is given by the cohomology of the following cochain complexe. For each $n \geq 0$, the $n$-th cochain group $C_{\text{coHoch}}^n (M, C)$ is given by $C_{\text{coHoch}}^0 (M, C) = 0$ and $C_{\text{coHoch}}^n (M, C) = \text{Hom} (M, C^{\otimes n})$, for $n \geq 1$. The coboundary $\delta_c : C_{\text{coHoch}}^n (M, C) \rightarrow C_{\text{coHoch}}^{n+1} (M, C)$ is given by
\begin{align}
\delta_c \sigma = (\text{id} \otimes  \sigma) \circ \triangle^l ~+~ \sum_{i=1}^n (-1)^i~(\text{id}_{C^{\otimes (i-1)}} \otimes \triangle \otimes \text{id}_{C^{\otimes (n-i)}}) \circ \sigma ~+~ (-1)^{n+1} (\sigma \otimes \text{id}) \circ \triangle^r , 
\end{align}
for $\sigma \in  C_{\text{coHoch}}^n (M, C).$\\

\noindent {\em Dendriform coalgebras.}

\medskip

\begin{defn}
A dendriform algebra $(A, \prec, \succ)$ consists of a vector space $A$ together with linear maps $\prec, \succ : A \otimes A \rightarrow A$ satisfying the following identities:
\begin{align*}
(a \prec b) \prec c =~& a \prec (b \prec c + b \succ c),\\
(a \succ b) \prec c =~& a \succ (b \prec c),\\
(a \prec b + a \succ b) \succ c =~& a \succ (b \succ c), ~~\text{for all }a , b, c \in A.
\end{align*}
\end{defn}

By dualizing the notion of dendriform algebra, one obtains the following notion.
\begin{defn}
A dendriform coalgebra $(C, \triangle_\prec, \triangle_\succ)$ consists of a vector space $C$ together with linear maps $\triangle_\prec, \triangle_\succ : C \rightarrow C \otimes C$ that satisfies
\begin{align}
(\triangle_\prec \otimes  \text{id}) \circ \triangle_\prec =~& (\text{id} \otimes (\triangle_\prec + \triangle_\succ)) \circ \triangle_\prec, \label{c1}\\
(\triangle_\succ \otimes \text{id}) \circ \triangle_\prec =~& (\text{id} \otimes \triangle_\prec ) \circ \triangle_\succ, \label{c2}\\
((\triangle_\prec + \triangle_\succ ) \otimes \text{id} ) \circ \triangle_\succ =~&  (\text{id} \otimes \triangle_\succ ) \circ \triangle_\succ . \label{c3}
\end{align}
\end{defn}

It follows from (\ref{c1}) + (\ref{c2}) + (\ref{c3}) that the sum operation $\triangle = \triangle_\prec + \triangle_\succ$ satisfies
%\begin{align*}
$(\triangle \otimes \text{id}) \circ \triangle = (\text{id} \otimes \triangle ) \circ \triangle,$
%\end{align*}
hence, an associative coproduct. Thus, dendriform coalgebras can be thought of as splitting of associative coalgebras.

Any associative coalgebra $(C, \triangle)$ can be thought of as a dendriform coalgebra with $\triangle_\prec = \triangle$ and $\triangle_\succ = 0$ (alternatively, $\triangle_\prec = 0$ and $\triangle_\succ = \triangle$).

\begin{defn}
Let $(C, \triangle)$ be an associative coalgebra and $(M, \triangle^l, \triangle^r)$ be a bicomodule. 
A linear map $ T : C \rightarrow M$ is said to be a relative Rota-Baxter operator if $T$ satisfies
\begin{align*}
(T\otimes T ) \circ \triangle = (\text{id} \otimes T ) \circ \triangle^r \circ T + (T \otimes \text{id}) \circ \triangle^l \circ T.
\end{align*}
\end{defn}
%In the standard terminology, the pair $(C, R)$ is called a Rota-Baxter coalgebra of weight $0$. 
%Note that Rota-Baxter coalgebras are dual to the notion of Rota-Baxter algebras. 
A Rota-Baxter operator is a relative Rota-Baxter operator for the self bicomodule.
%See \cite{jian-zhang, ma-liu} for some examples of Rota-Baxter operators.

\begin{prop}\label{rota-dend}
Let $T : C \rightarrow M$ be a relative Rota-Baxter coalgebra. Then $(M, \triangle_\prec, \triangle_\succ)$ is a dendriform coalgebra where $\triangle_\prec = (\mathrm{id} \otimes T) \circ \triangle^r $  ~and~ $\triangle_\succ = (T \otimes \mathrm{id} ) \circ \triangle^l.$
\end{prop}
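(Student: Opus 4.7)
The plan is to verify the three dendriform coalgebra identities (\ref{c1}), (\ref{c2}), (\ref{c3}) for $(M, \triangle_\prec, \triangle_\succ)$ by direct computation, unpacking the definitions of $\triangle_\prec, \triangle_\succ$ in Sweedler-type notation. Write $\triangle^r(m) = m_{(0)} \otimes m_{(1)} \in M \otimes C$, $\triangle^l(m) = m_{(-1)} \otimes m_{(0)} \in C \otimes M$, and $\triangle(c) = c_{(1)} \otimes c_{(2)}$, so that $\triangle_\prec(m) = m_{(0)} \otimes T(m_{(1)})$ and $\triangle_\succ(m) = T(m_{(-1)}) \otimes m_{(0)}$. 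Each side of (\ref{c1})--(\ref{c3}) then becomes a sum of three-fold tensors involving $T$ applied in various places, and the task is to match them using the bicomodule axioms together with the relative Rota-Baxter identity.

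I would do (\ref{c2}) first, since it is the cleanest warm-up: both sides equal $T(m_{(-1)}) \otimes m_{(0)} \otimes T(m_{(1)})$, where the middle bicomodule axiom $(\triangle^l \otimes \mathrm{id}) \circ \triangle^r = (\mathrm{id} \otimes \triangle^r) \circ \triangle^l$ is exactly what lets this three-fold expression be unambiguously defined. No Rota-Baxter identity is needed here. Next I would verify (\ref{c1}): expanding the LHS and applying right coassociativity $(\triangle^r \otimes \mathrm{id}) \circ \triangle^r = (\mathrm{id} \otimes \triangle) \circ \triangle^r$ gives $m_{(0)} \otimes T((m_{(1)})_{(1)}) \otimes T((m_{(1)})_{(2)})$, i.e.\ $m_{(0)} \otimes (T \otimes T)\triangle(m_{(1)})$. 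On the other hand, the RHS factors as $m_{(0)} \otimes \bigl[(\mathrm{id} \otimes T)\circ \triangle^r + (T \otimes \mathrm{id})\circ \triangle^l\bigr]\bigl(T(m_{(1)})\bigr)$. Here the relative Rota-Baxter identity applied to $c = m_{(1)}$ is precisely what turns this bracket into $(T \otimes T)\triangle(m_{(1)})$. Axiom (\ref{c3}) is dealt with by the mirror-image argument: left coassociativity of $\triangle^l$ rewrites the RHS as $(T \otimes T)\triangle(m_{(-1)}) \otimes m_{(0)}$, and the relative Rota-Baxter identity applied to $c = m_{(-1)}$ collapses the LHS to the same expression.

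The only genuine content is thus a single observation, applied twice: the Rota-Baxter relation $(T \otimes T)\circ\triangle = (\mathrm{id} \otimes T)\circ\triangle^r\circ T + (T \otimes \mathrm{id})\circ\triangle^l\circ T$ is exactly what converts the combined coproduct $\triangle_\prec + \triangle_\succ$ evaluated on an element of the form $T(c)$ into the ``split'' expression $(T \otimes T)\triangle(c)$. Everything else is bookkeeping using the three bicomodule axioms (left/right coassociativity and middle compatibility). The main (mild) obstacle is just keeping the Sweedler indices straight, since $\triangle_\prec$ produces a $C$-index on the right while $\triangle_\succ$ produces one on the left, and in the iterated compositions these arise from $\triangle^r$, $\triangle^l$, $\triangle$, and the Rota-Baxter splitting in different orders; a careful tabulation of each of the nine terms should suffice.
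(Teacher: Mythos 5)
Your proposal is correct and is essentially the paper's own argument, just written in Sweedler notation instead of as compositions of maps: the paper likewise isolates the identity $(T \otimes T)\circ \triangle = (\triangle_\prec + \triangle_\succ)\circ T$ as the single substantive input and derives each of (\ref{c1})--(\ref{c3}) by applying a suitable tensor product of $T$'s and identities to the corresponding bicomodule axiom. Your observation that (\ref{c2}) needs only the middle compatibility axiom and no Rota--Baxter relation also matches the paper exactly.
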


\begin{proof}
It follows from the definition of relative Rota-Baxter operator that
\begin{align*}
(T \otimes T) \circ \triangle = (\triangle_\prec + \triangle_\succ) \circ T.
\end{align*}
Next observe that
\begin{align*}
(\text{id} \otimes T \otimes T ) \circ (\triangle^r \otimes \text{id}) \circ \triangle^r 
=~& ((\text{id} \otimes T) \triangle^r  \otimes T ) \circ \triangle^r
= (\triangle_\prec \otimes T ) \circ \triangle^r \\
=~& (\triangle_\prec \otimes \text{id}) \circ (\text{id} \otimes T) \circ \triangle^r = (\triangle_\prec \otimes \text{id} ) \circ \triangle_\prec .
\end{align*}
On the other hand
\begin{align*}
(\text{id} \otimes T \otimes T) \circ (\text{id} \otimes \triangle) \circ \triangle^r 
=~& (\text{id} \otimes (T \otimes T) \circ \triangle ) \circ \triangle^r 
= (\text{id} \otimes (\triangle_\prec + \triangle_\succ) \circ T ) \circ \triangle^r \\
=~& (\text{id} \otimes (\triangle_\prec + \triangle_\succ)) \circ (\text{id} \otimes T ) \circ \triangle^r 
= (\text{id} \otimes (\triangle_\prec + \triangle_\succ)) \circ \triangle_\prec.
\end{align*}
Since $\triangle$ is an associative coproduct, it follows that 
\begin{align*}
 (\triangle_\prec \otimes \text{id} ) \circ \triangle_\prec = (\text{id} \otimes (\triangle_\prec + \triangle_\succ)) \circ \triangle_\prec. 
\end{align*} 
 Similarly, by evaluating both sides of the identity $(T \otimes \text{id} \otimes T ) \circ (\triangle^l \otimes \text{id}) \circ \triangle^r = (T \otimes \text{id} \otimes T ) \circ (\text{id} \otimes \triangle^r) \circ \triangle^l$, we obtain $(\triangle_\succ \otimes \text{id}) \circ \triangle_\prec = (\text{id} \otimes \triangle_\prec ) \circ \triangle_\succ$. Finally, evaluating both sides of the identity $(T \otimes T \otimes \text{id}) \circ (\triangle \otimes \text{id}) \circ \triangle^l = (T \otimes T \otimes \text{id}) \circ (\text{id} \otimes \triangle^l) \circ \triangle^l$ will gives rise to $((\triangle_\prec + \triangle_\succ ) \otimes \text{id} ) \circ \triangle_\succ =  (\text{id} \otimes \triangle_\succ ) \circ \triangle_\succ$.
Hence the proof.
\end{proof}

%In section \ref{sec-5}, we give a homotopy version of this result to construct $\text{Dend}_\infty$-coalgebras.

\section{Cohomology of dendriform coalgebras} \label{sec-3}
In this section, we introduce representations and cohomology of dendriform coalgebras. This can be seen as a dual to the cohomology theory of dendriform algebras as explained in \cite{das1}. We also relate this cohomology with the coHochschild cohomology of the corresponding associative coalgebra.

Let $C = (C, \triangle_\prec, \triangle_\succ)$ be a dendriform coalgebra.
\begin{defn}
A bicomodule over $C$ consists of a vector space $M$ together with linear maps
\begin{align*}
\triangle^l_\prec , \triangle^l_\succ : M \rightarrow C \otimes M ~~ \qquad \text{ and } \qquad \qquad ~~ ~~ \triangle^r_\prec , \triangle^r_\succ : M \rightarrow M \otimes C
\end{align*}
satisfying the following $9$ identities
\begin{align}
(\triangle_\prec \otimes  \text{id}) \circ \triangle^l_\prec =~& (\text{id} \otimes (\triangle_\prec^l + \triangle_\succ^l)) \circ \triangle_\prec^l, \label{r1}\\
(\triangle_\succ \otimes \text{id}) \circ \triangle_\prec^l =~& (\text{id} \otimes \triangle_\prec^l ) \circ \triangle_\succ^l, \label{r2}\\
((\triangle_\prec + \triangle_\succ ) \otimes \text{id} ) \circ \triangle_\succ^l =~&  (\text{id} \otimes \triangle_\succ^l ) \circ \triangle_\succ^l, \label{r3}\\
(\triangle_\prec^l \otimes  \text{id}) \circ \triangle_\prec^r =~& (\text{id} \otimes (\triangle_\prec^r + \triangle_\succ^r)) \circ \triangle_\prec^l, \label{r4}\\
(\triangle_\succ^l \otimes \text{id}) \circ \triangle_\prec^r =~& (\text{id} \otimes \triangle_\prec^r ) \circ \triangle_\succ^l, \label{r5}\\
((\triangle_\prec^l + \triangle_\succ^l ) \otimes \text{id} ) \circ \triangle_\succ^r =~&  (\text{id} \otimes \triangle_\succ^r ) \circ \triangle_\succ^l , \label{r6}\\
(\triangle_\prec^r \otimes  \text{id}) \circ \triangle_\prec^r =~& (\text{id} \otimes (\triangle_\prec + \triangle_\succ)) \circ \triangle_\prec^r, \label{r7}\\
(\triangle_\succ^r \otimes \text{id}) \circ \triangle_\prec^r =~& (\text{id} \otimes \triangle_\prec ) \circ \triangle_\succ^r, \label{r8}\\
((\triangle_\prec^r + \triangle_\succ^r ) \otimes \text{id} ) \circ \triangle_\succ^r =~&  (\text{id} \otimes \triangle_\succ ) \circ \triangle_\succ^r . \label{r9}
\end{align}
\end{defn}

It follows from the above definition that $C$ is a bicomodule over itself with $\triangle^l_\prec  = \triangle^r_\prec = \triangle_\prec$ and $\triangle^l_\succ = \triangle^r_\succ = \triangle_\succ.$\\

\begin{prop} (Semi-direct product)
Let $C$ be a dendriform coalgebra and $M$ be a bicomodule over $C$. Then the direct sum $C \oplus M$ inherits a dendriform coalgebra structure with comultiplications $\triangle_\prec, ~ \triangle_\succ : (C \oplus M) \rightarrow (C \oplus M) \otimes (C \oplus M)$ are given by (on generators)
\begin{align*}
\triangle_\prec (c, 0) := \triangle_\prec (c), \quad \triangle_\prec( 0, m) := \triangle^l_\prec (m) + \triangle_\prec^r (m), \\
\triangle_\succ (c, 0) := \triangle_\succ (c), \quad \triangle_\succ( 0, m) := \triangle^l_\succ (m) + \triangle_\succ^r (m).\\
\end{align*}
\end{prop}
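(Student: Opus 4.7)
The plan is to verify identities \eqref{c1}, \eqref{c2}, \eqref{c3} for the extended comultiplications $\triangle_\prec$ and $\triangle_\succ$ on $C \oplus M$. Since both maps are linear, it suffices to test them on the two types of generators $(c,0)$ and $(0,m)$.

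For inputs $(c,0)$, the definition gives $\triangle_\prec(c,0) = \triangle_\prec(c)$ and $\triangle_\succ(c,0) = \triangle_\succ(c)$, both living in $C \otimes C$. Iterated applications therefore stay inside $C^{\otimes 3}$, and the three identities \eqref{c1}--\eqref{c3} for $C \oplus M$ reduce verbatim to the dendriform coalgebra identities of $C$, which hold by hypothesis.

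The substantive case is $(0,m)$. For a given identity, my approach is to expand both sides and sort the resulting summands in $(C \oplus M)^{\otimes 3}$ according to the single tensor slot in which the $M$-component appears. This sorting is well-defined because $\triangle_\prec^l, \triangle_\succ^l$ land in $C \otimes M$, $\triangle_\prec^r, \triangle_\succ^r$ land in $M \otimes C$, and the extended $\triangle_\prec, \triangle_\succ$ act on a pure $C$-tensor factor as the original dendriform comultiplications. Thus after two coapplications every summand carries $M$ in exactly one of the three slots, and the equality to prove splits into three independent equalities, one per slot.

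The combinatorics match precisely: identity \eqref{c1} applied to $(0,m)$ splits as \eqref{r1} (with $M$ in slot $3$), \eqref{r4} (slot $2$), \eqref{r7} (slot $1$); identity \eqref{c2} splits as \eqref{r2}, \eqref{r5}, \eqref{r8}; identity \eqref{c3} splits as \eqref{r3}, \eqref{r6}, \eqref{r9}. Each of the nine bicomodule axioms appears exactly once in this bookkeeping, which is in fact the rationale behind the apparently redundant ninefold list in the definition of a bicomodule. The only obstacle to overcome is this tracking of slots and of which $\triangle^{l/r}_{\prec/\succ}$ is triggered on which factor at each step; no additional structural input is needed.
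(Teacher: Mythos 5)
Your argument is correct: the paper states this proposition without proof, and your verification is exactly the intended one — linearity reduces everything to the generators, the $(c,0)$ case is the dendriform identities on $C$, and for $(0,m)$ the decomposition of $(C\oplus M)^{\otimes 3}$ by the position of the $M$-factor splits \eqref{c1}, \eqref{c2}, \eqref{c3} into \eqref{r1}/\eqref{r4}/\eqref{r7}, \eqref{r2}/\eqref{r5}/\eqref{r8}, \eqref{r3}/\eqref{r6}/\eqref{r9} respectively, with each bicomodule axiom used exactly once. Your slot assignments check out, so nothing is missing.
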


For convenience, we define maps $\triangle^l \in \text{Hom} ( \mathbb{K}[C_2] \otimes M, C \otimes M)$ and $\triangle^r \in \text{hom} (\mathbb{K}[C_2] \otimes M, M \otimes C)$ by
\begin{align*}
\triangle^l ([1]; ~\_~ ) = \triangle_\prec^l, \quad \triangle^l ([2]; ~\_~ ) = \triangle^l_\succ, \quad \triangle^r ([1]; ~\_~ ) = \triangle^r_\prec \quad \text{and} \quad \triangle^r([2]; ~\_~) = \triangle^r_\succ.
\end{align*}
These notations will be used in the coboundary operator defining the cohomology.

To define the cohomology of a dendriform coalgebra, we need to recall certain combinatorial maps which are defined in \cite{das1}. Let $C_n$ be the set of first $n$ natural numbers. For convenience, we denote the elements of $C_n$ as $\{ [1], [2], \ldots, [n] \}$. For each $m, n \geq 1$, there are maps $R_0 (m; \overbrace{1, \ldots, 1, \underbrace{n}_{i\text{-th place}}, 1, \ldots, 1}^{m}) : C_{m+n-1} \rightarrow C_m$ given by
\begin{align*} R_0 (m; 1, \ldots, 1, n, 1, \ldots, 1) ([r]) ~=~
\begin{cases} [r] ~~~ &\text{ if } ~~ r \leq i-1 \\ [i] ~~~ &\text{ if } i \leq r \leq i +n -1 \\
[r -n + 1] ~~~ &\text{ if } i +n \leq r \leq m+n -1. \end{cases}
\end{align*}
There are also maps $R_i (m; \overbrace{1, \ldots, 1, \underbrace{n}_{i\text{-th place}}, 1, \ldots, 1}^{m}) : C_{m+n-1} \rightarrow \mathbb{K}[C_n]$ defined by
\begin{align*} R_i (m; 1, \ldots, 1, n, 1, \ldots, 1) ([r]) ~=~
\begin{cases} [1] + [2] + \cdots + [n] ~~~ &\text{ if } ~~ r \leq i-1 \\ [r - (i-1)] ~~~ &\text{ if } i \leq r \leq i +n -1 \\
[1]+ [2] + \cdots + [n] ~~~ &\text{ if } i +n \leq r \leq m+n -1. \end{cases}\\
\end{align*}

For any vector space $C$, we consider a collection of vector spaces $\{ \mathcal{O}(n) |~ n \geq 1 \}$ by $\mathcal{O}(n) = \text{Hom} (\mathbb{K}[C_n] \otimes C, C^{\otimes n} )$, for $n \geq 1$. Define partial compositions $\bullet_i : \mathcal{O}(m) \otimes \mathcal{O}(n) \rightarrow \mathcal{O}(m+n-1)$ by
\begin{align*}
& (f \bullet_i g) ([r]; ~\_~) \\
~& = (\text{id}^{\otimes (i-1)} \otimes g (R_i (m; 1, \ldots, n, \ldots, 1)[r]; ~\_~) \otimes \text{id}^{\otimes (m-i)}) \circ f (R_0 (m; 1, \ldots, n, \ldots, 1)[r]; ~\_~),
\end{align*}
for $[r] \in C_{[m+n-1]}.$ Then we have the following.

\begin{prop}
With the above notations, the collection of vector spaces $\{ \mathcal{O}(n) |~ n \geq 1 \}$ forms a non-symmetric operad with partial compositions $\bullet_i$ and identity element $\mathrm{id} \in \mathcal{O}(1)$ being given by $\mathrm{id } ([1]; ~\_~) = \mathrm{id}.$
\end{prop}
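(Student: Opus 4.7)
The plan is to verify the three families of axioms defining a non-symmetric operad on $\{\mathcal{O}(n)\mid n\ge 1\}$: the unit axioms with respect to $\mathrm{id}\in\mathcal{O}(1)$, and the sequential and parallel associativity of the partial compositions $\bullet_i$. All three reduce, via the definition of $\bullet_i$, to combinatorial identities among the maps $R_0$ and $R_i$, each of which can be settled by a case split on whether the input index $[r]$ lies before, inside, or after one of the inserted blocks.

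Unitality is immediate. For $f\bullet_i\mathrm{id}$, note that when the inserted arity is $n=1$ the formula for $R_0(m;1,\ldots,1,1,1,\ldots,1)$ collapses to $\mathrm{id}_{C_m}$ and $R_i(m;1,\ldots,1)$ sends every $[r]$ to $[1]$, so $(f\bullet_i\mathrm{id})([r];\_)=f([r];\_)$. For $\mathrm{id}\bullet_1 f$, the definitions give $R_0(1;n)([r])=[1]$ and $R_1(1;n)([r])=[r]$, which together with $\mathrm{id}([1];\_)=\mathrm{id}$ yield $(\mathrm{id}\bullet_1 f)([r];\_)=f([r];\_)$.

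For the associativity axioms, take $f\in\mathcal{O}(m)$, $g\in\mathcal{O}(n)$, $h\in\mathcal{O}(p)$ and unfold both sides of the sequential identity $(f\bullet_i g)\bullet_{i+j-1}h=f\bullet_i(g\bullet_j h)$ (with $1\le i\le m$, $1\le j\le n$) and of the parallel identity $(f\bullet_i g)\bullet_{k+n-1}h=(f\bullet_k h)\bullet_i g$ (with $1\le i<k\le m$) using the definition of $\bullet_i$. Each side becomes a triple tensor composition in which the positions contributed by $f$, $g$, $h$ are tracked by iterated applications of $R_0$ and $R_i$, and the identities reduce to relations such as
\begin{align*}
R_0(m;\ldots,n,\ldots)\circ R_0(m+n-1;\ldots,p,\ldots) &= R_0(m;\ldots,n+p-1,\ldots),\\
R_j(n;\ldots,p,\ldots)\circ R_{i+j-1}(m+n-1;\ldots,p,\ldots) &= R_i(m;\ldots,n+p-1,\ldots),
\end{align*}
each of which is enforced on the appropriate range, together with analogous relations covering the parallel case.

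The main obstacle is purely bookkeeping. Composing the piecewise definitions of $R_0$ and $R_i$ partitions the source $C_{m+n+p-2}$ into five regimes in the sequential case (and four in the parallel case), and one must check on each regime that both sides produce the same triple of indices feeding into $f$, $g$, and $h$. Since $\mathcal{O}(n)=\mathrm{Hom}(\mathbb{K}[C_n]\otimes C,C^{\otimes n})$ is the linear dual of the configuration used to build the analogous operad for dendriform algebras in \cite{das1}, the same combinatorial identities on $R_0$ and $R_i$ underlie both proofs; the argument can therefore be completed by performing one representative case split explicitly and referring to \cite{das1} for the remaining routine verifications.
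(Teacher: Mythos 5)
Your proposal is correct and takes essentially the same route as the paper: the paper's entire proof consists of the remark that the result and its proof are completely dual to Proposition 2.2 of \cite{das1}, and your argument---checking unitality directly and reducing both associativity axioms to combinatorial identities among $R_0$ and $R_i$ that are then deferred to \cite{das1}---is precisely that dualization spelled out. One blemish worth fixing: your second displayed ``representative'' identity is mis-indexed (its two factors do not compose, since $R_{i+j-1}(m+n-1;\ldots,p,\ldots)$ lands in $\mathbb{K}[C_p]$ while $R_j(n;\ldots,p,\ldots)$ is defined on $C_{n+p-1}$); the relation tracking the $h$-slot should read $R_j(n;\ldots,p,\ldots)\circ R_i(m;\ldots,n+p-1,\ldots)=R_{i+j-1}(m+n-1;\ldots,p,\ldots)$, but since you explicitly present these as illustrative and defer the full bookkeeping to \cite{das1}, this does not affect the argument.
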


This result (and its proof) is completely dual to Proposition 2.2 \cite{das1}. In fact, this proposition was our initial motivation to study cohomology of dendriform coalgebras which we will define below.

Next assume that $(C, \triangle_\prec, \triangle_\succ)$ is a dendriform coalgebra. Define an element $\triangle \in \mathcal{O}(2) = \text{Hom} (\mathbb{K}[C_2] \otimes C, C^{\otimes 2} )$ by
\begin{align*}
\triangle ([1]; ~\_~ ) = \triangle_\prec     ~~~~ \text{ and } ~~~~  \triangle ([2]; ~\_~ ) = \triangle_\succ.
\end{align*}
Then it is easy to see that $\triangle \in \mathcal{O}(2)$ defines a multiplication in the operad.

%Therefore, one can define a degree $-1$ graded Lie bracket, an associative (cup) product and a differential. The corresponding cohomology is the cohomology of the dendriform coalgebra $C$ with coefficients in itself. Therefore, it follows that the cohomology inherits a Gerstenhaber structure.  See Appendix).

We are now in a position to define the cohomology of a dendriform coalgebra with coefficients in a bicomodule. Let $M$ be a bicomodule over a dendriform coalgebra $C$. We define the group of $n$-cochains as $C^n_{\text{coDend}}(M, C) := \text{Hom} (\mathbb{K}[C_n] \otimes M , C^{\otimes n})$, for $n \geq 1$, and $C^0_{\text{coDend}}(M, C) = 0$. The coboundary map $\delta_c : C^n_{\text{coDend}}(M, C) \rightarrow C^{n+1}_{\text{coDend}}(M, C)$ is defined by\\
\begin{align}\label{dend-co-diff}
( \delta_c \sigma )& ([r] \otimes m) \\
 =~& (\text{id} \otimes \sigma (R_2 (2;1,n)[r];~ \_~ )) \circ \triangle^l (R_0 (2;1, n)[r]; m) \nonumber \\
+~& \sum_{i=1}^n  (-1)^i ~(\text{id}_{C^{\otimes (i-1)}} \otimes \triangle_{R_i (n; 1, \ldots, 2, \ldots, 1)[r]} \otimes \text{id}_{C^{\otimes (n-i)}}) \circ \sigma (R_0 (n;1, \ldots, 2, \ldots, 1)[r]; m) \nonumber \\
+~& (-1)^{n+1}~ (\sigma (R_1 (2;n,1)[r]; ~\_ ~) \otimes \text{id} ) \circ  \triangle^r (R_0 (2;n,1)[r]; m), \nonumber
\end{align}
for $\sigma \in C^n_{\text{coDend}}(M, C)$, $[r] \in C_{n+1}$ and $m \in M$.

Then we have $(\delta_c)^2 = 0$ (see the Remark below) and the corresponding cohomology groups are denoted by $H^n_{\text{coDend}}(M, C)$, for $n \geq 1$.

%\textcolor{red}{Recall what is $\theta^l, \theta^r, \triangle ?$}

\begin{remark}
When $M = C$ with the obvious bicomodule given by $\triangle_\prec^l = \triangle_\prec^r = \triangle_\prec$ and $\triangle_\succ^l = \triangle_\succ^r = \triangle_\succ$, the above coboundary
map coincides with the one induced from the multiplication on the operad. Therefore, the cohomology inherits
a Gerstenhaber algebra structure. This first observation also ensures that $(\delta_c)^2 = 0$ for the
coboundary map (\ref{dend-co-diff}) with coefficients in an arbitrary bicomodule.
\end{remark}

\begin{remark}Note that the element $\triangle \in \text{Hom} (\mathbb{K}[C_2] \otimes C, C^{\otimes 2})$ defines a $2$-cocycle in the cohomology complex of $C$ with coefficients in itself (which follows from dendriform coalgebra identities) as
\begin{align*}
(\delta_c \triangle ) ([1] \otimes m) =~& 2 \big[ (\text{id} \otimes (\triangle_\prec + \triangle_\succ)) \circ \triangle_\prec -  (\triangle_\prec \otimes  \text{id}) \circ \triangle_\prec  \big] = 0,\\
(\delta_c \triangle ) ([2] \otimes m) =~& 2 \big[ (\text{id} \otimes \triangle_\prec ) \circ \triangle_\succ -  (\triangle_\succ \otimes \text{id}) \circ \triangle_\prec   \big] = 0,\\
(\delta_c \triangle ) ([3] \otimes m) =~& 2 \big[ (\text{id} \otimes \triangle_\succ ) \circ \triangle_\succ -  ((\triangle_\prec + \triangle_\succ ) \otimes \text{id} ) \circ \triangle_\succ  \big] = 0.
\end{align*}
The $2$-cocycle $\triangle$ is infact a coboundary and is given by $\triangle = \delta_c (\text{id}).$ To show this, we observe that
\begin{align*}
\delta_c (\text{id}) ([1] ; m) =~& (\text{id} \otimes \text{id}) \circ \triangle_\prec ~-~ \triangle_\prec ~+ ~ (\text{id} \otimes \text{id}) \circ \triangle_\prec = \triangle_\prec, \\
\delta_c (\text{id}) ([2] ; m) =~& (\text{id} \otimes \text{id}) \circ \triangle_\succ ~-~ \triangle_\succ ~+ ~ (\text{id} \otimes \text{id}) \circ \triangle_\succ = \triangle_\succ.\\
\end{align*}
\end{remark}

Let $(A, \prec, \succ)$ be a finite dimensional dendriform algebra. Then the dual vector space $A^*$ equipped with the coproducts
\begin{align*}
\triangle_\prec = w^{-1} \circ \prec^*   ~~~~ \text{   and   }   ~~~~   \triangle_\succ = w^{-1} \circ \succ^*
\end{align*}
forms a dendriform coalgebra. Here $w : (A^*)^{\otimes 2} \rightarrow (A^{\otimes 2})^{*}$ denote the isomorphism given by $w (f \otimes g) (a \otimes b) = f(a) g(b).$ In the following, we show that the dendriform algebra cohomology of $A$ is isomorphic to the dendriform coalgebra cohomology of $A^*$.

Let us first recall the cohomology of a dendriform algebra with coefficients in itself. It has been shown in \cite{das1} that for any vector space $A$, the collection of spaces $\{ \text{Hom} (\mathbb{K}[C_n] \otimes A^{\otimes n}, A ) \}_{n \geq 1}$ inherits an operad structure with partial compositions given by
\medskip
\begin{align*}
&(f \circ_i g) ([r]; a_1, \ldots, a_{m+n-1}) \\
&= f (R_0 (m; 1, \ldots, n, \ldots, 1)[r];~ a_1, \ldots, a_{i-1}, g (R_i (m; 1, \ldots, n, \ldots, 1)[r]; a_i, \ldots, a_{i+n-1}), a_{i+n}, \ldots, a_{m+n-1})
\end{align*}

\medskip

\noindent for $ f \in \text{Hom} (\mathbb{K}[C_m] \otimes A^{\otimes m}, A ), ~ g \in \text{Hom} (\mathbb{K}[C_n] \otimes A^{\otimes n}, A ), ~[r] \in C_{m+n-1}$ and $a_1, \ldots, a_{m+n-1} \in A$. If $(A, \prec, \succ)$ is a dendriform algebra, it defines a multiplication $\pi \in \text{Hom} (\mathbb{K}[C_2] \otimes A^{\otimes 2}, A )$ on the above operad, where $\pi$ is given by $\pi ([1]; ~) = ~\prec$ and $\pi ([2]; ~ ) =~ \succ$. The differential induced from this multiplication is precisely the coboundary operator defining the cohomology of $A$. See \cite{das1} for more details and explicit formula for the coboundary.

Note that if $A$ is finite dimensional, then the isomorphism $w$ can be generalized to isomorphisms (also denoted by the same notation is no confusion arises) $w : (A^*)^{\otimes n} \rightarrow (A^{\otimes n})^{*}$, for $n \geq 1$, given by
\begin{align*}
w (f_1 \otimes \cdots \otimes f_n) (a_1 \otimes \cdots \otimes a_n) = f_1(a_1) \cdots f_n (a_n).
\end{align*}

\begin{prop}
For any finite dimensional vector space $A$, there is an isomorphism (in the category of vector spaces) of operads
\begin{align*}
\big\{ \mathrm{Hom}(\mathbb{K}[C_n] \otimes A^{\otimes n}, A ), \circ_i \big\}_{n \geq 1} ~& \rightarrow  \big\{ \mathrm{Hom} (\mathbb{K}[C_n] \otimes A^*, {A^*}^{\otimes n}), \bullet_i \big\}_{n \geq 1} \\
f := (f_{[1]}, \ldots, f_{[n]} ) ~& \mapsto w^{-1} \circ f^* := (w^{-1} \circ f_{[1]}^*, \ldots, w^{-1} \circ f_{[n]}^*).
\end{align*}
Moreover, if $A$ is a dendriform algebra with the dual dendriform coalgebra structure on $A^*$, then the above morphism of operads preserve the corresponding multiplications (in the operads).
\end{prop}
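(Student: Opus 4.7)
The plan is to exploit the finite-dimensionality of $A$ to reduce the statement to classical linear duality applied componentwise, and then to verify operadic compatibility by chasing a single formula.

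First I would note that any $f \in \mathrm{Hom}(\mathbb{K}[C_n] \otimes A^{\otimes n}, A)$ is determined by its tuple of components $f_{[r]} := f([r]; \_) \in \mathrm{Hom}(A^{\otimes n}, A)$, and similarly on the coalgebraic side a map is determined by components in $\mathrm{Hom}(A^*, (A^*)^{\otimes n})$. Since $A$ is finite-dimensional, the assignment $h \mapsto w^{-1} \circ h^*$ is a linear isomorphism $\mathrm{Hom}(A^{\otimes n}, A) \xrightarrow{\sim} \mathrm{Hom}(A^*, (A^*)^{\otimes n})$, where $h^*$ denotes the naive dual and $w : (A^*)^{\otimes n} \to (A^{\otimes n})^*$ is the canonical isomorphism generalising the one introduced just before the proposition. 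Taking the product over $[r] \in C_n$ then gives the claimed bijection of underlying vector spaces.

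For operadic compatibility, I would fix $f \in \mathrm{Hom}(\mathbb{K}[C_m] \otimes A^{\otimes m}, A)$, $g \in \mathrm{Hom}(\mathbb{K}[C_n] \otimes A^{\otimes n}, A)$ and $[r] \in C_{m+n-1}$. Writing $R_0, R_i$ as shorthand for the combinatorial maps associated to $(m; 1, \ldots, n, \ldots, 1)$, the definition of $\circ_i$ reads
\[
(f \circ_i g)_{[r]} = f_{R_0[r]} \circ \big( \mathrm{id}^{\otimes (i-1)} \otimes g_{R_i[r]} \otimes \mathrm{id}^{\otimes (m-i)} \big).
\]
I would then dualise this equality and post-compose with $w^{-1}$, invoking the fact that for finite-dimensional spaces $w$ intertwines tensor products with dualisation, so that $(\phi_1 \otimes \cdots \otimes \phi_k)^*$ corresponds under $w$ to $\phi_1^* \otimes \cdots \otimes \phi_k^*$. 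This would produce
\[
w^{-1} \circ (f \circ_i g)_{[r]}^* = \big( \mathrm{id}^{\otimes (i-1)} \otimes (w^{-1} \circ g_{R_i[r]}^*) \otimes \mathrm{id}^{\otimes (m-i)} \big) \circ \big( w^{-1} \circ f_{R_0[r]}^* \big),
\]
which is exactly the $[r]$-component of $(w^{-1} \circ f^*) \bullet_i (w^{-1} \circ g^*)$ by the formula for $\bullet_i$ given earlier in this section. The identity $\mathrm{id} \in \mathcal{O}(1)$ is sent to $w^{-1} \circ \mathrm{id}_A^* = \mathrm{id}_{A^*}$, the identity in the coalgebraic operad.

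For the multiplication assertion, if $A$ is a dendriform algebra then $\pi \in \mathrm{Hom}(\mathbb{K}[C_2] \otimes A^{\otimes 2}, A)$ has components $\pi_{[1]} = \prec$ and $\pi_{[2]} = \succ$, so its image under the morphism has components $w^{-1} \circ \prec^* = \triangle_\prec$ and $w^{-1} \circ \succ^* = \triangle_\succ$ by the very definition of the dual dendriform coalgebra on $A^*$; this is precisely the multiplication $\triangle$ in the coalgebraic operad. The main obstacle in the whole argument is purely bookkeeping: one must carry the combinatorial labels $R_0[r]$ and $R_i[r]$ consistently on both sides and verify that $w$ intertwines the two incarnations of ``insertion''---inserting $g$ as an argument of $f$ on the algebraic side versus tensor-product insertion of a coproduct factor on the coalgebraic side. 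Since $R_0$ and $R_i$ are intrinsic and appear identically in both partial-composition formulas, once this duality-of-insertions step is in hand, the rest of the verification is symbolic.
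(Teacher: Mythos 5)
Your proposal is correct and follows essentially the same route as the paper: reduce to the componentwise identity $w^{-1}\circ (f\circ_i g)^*_{[r]} = ((w^{-1}\circ f^*)\bullet_i (w^{-1}\circ g^*))([r];\,\_\,)$ and verify it from the definitions of $\circ_i$ and $\bullet_i$ using that $w$ intertwines tensor products with dualisation. The paper's proof is just a terser version of this (it asserts the identity "follows from the definitions" and that the multiplication statement is then clear), whereas you supply the dualisation-reverses-composition step and the computation $w^{-1}\circ\prec^*=\triangle_\prec$, $w^{-1}\circ\succ^*=\triangle_\succ$ explicitly.
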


\begin{proof}
The second statement is clear if we prove the first one. To prove the first one, we only need to prove that
\begin{align*}
w^{-1} \circ (f \circ_i g)^*_{[r]} = ((w^{-1} \circ f^*) \bullet_i (w^{-1} \circ g^*)) ([r]; ~\_~),
\end{align*}
for $f \in \mathrm{Hom}(\mathbb{K}[C_m] \otimes A^{\otimes m}, A )$, $g \in \mathrm{Hom}(\mathbb{K}[C_n] \otimes A^{\otimes n}, A )$, $1 \leq i \leq m$ and $[r] \in C_{m+n-1}$. This follows from the definitions of partial compositions $\circ_i$ and $\bullet_i$.
\end{proof}

As a corollary, we get the following.
\begin{thm}
If $A$ is a finite-dimensional dendriform algebra, then the cohomology of $A$ and the dendriform coalgebra cohomology of $A^*$ are isomorphic.
\end{thm}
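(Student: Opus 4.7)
The plan is to deduce this theorem essentially for free from the preceding proposition. The key observation is that both cohomology theories are instances of the standard coboundary construction on a non-symmetric operad equipped with a multiplication: once one has an isomorphism of operads with multiplication, the induced cochain complexes are isomorphic, and so are their cohomologies.

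More concretely, I would first recall that for any non-symmetric operad $\mathcal{O}$ with a multiplication $\mu \in \mathcal{O}(2)$, there is a canonical Gerstenhaber-type coboundary on $\bigoplus_{n \geq 1} \mathcal{O}(n)$ of the form
\begin{align*}
\delta \sigma \;=\; \mu \bullet_1 \sigma \;-\; \sum_{i=1}^{n} (-1)^{i-1}\, \sigma \bullet_i \mu \;+\; (-1)^{n}\, \mu \bullet_2 \sigma,
\end{align*}
for $\sigma \in \mathcal{O}(n)$, and this is the differential whose cohomology is intrinsic to the operad with multiplication. By the construction in \cite{das1}, the coboundary defining the dendriform algebra cohomology of $A$ is precisely this operator applied to the operad $\{\mathrm{Hom}(\mathbb{K}[C_n] \otimes A^{\otimes n}, A), \circ_i\}$ with the multiplication $\pi$. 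Likewise, by the first Remark following (\ref{dend-co-diff}), the coboundary defining the dendriform coalgebra cohomology of $A^*$ (with self-coefficients) is the same operadic differential applied to $\{\mathrm{Hom}(\mathbb{K}[C_n] \otimes A^*, (A^*)^{\otimes n}), \bullet_i\}$ with the multiplication $\triangle = w^{-1} \circ \pi^*$.

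Having reduced the statement to a comparison of operadic differentials, I would invoke the previous proposition: the map $\Phi : f \mapsto w^{-1} \circ f^*$ is a vector space isomorphism of operads that carries $\pi$ to $\triangle$. Since $\Phi$ intertwines the partial compositions and preserves the multiplication elements, it automatically commutes with the operadic coboundaries on each side. Hence $\Phi$ is an isomorphism of cochain complexes, and the induced map on cohomology gives the desired isomorphism between the dendriform algebra cohomology of $A$ and the dendriform coalgebra cohomology of $A^*$.

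The only real point to verify — and the step I would expect to require the most care — is that the explicit combinatorial coboundary formulas (the ones built from the maps $R_0$ and $R_i$) on the algebra side in \cite{das1} and on the coalgebra side in (\ref{dend-co-diff}) really do coincide with the uniform operadic expression above. This is essentially bookkeeping: the maps $R_0, R_i$ were designed precisely to encode how $\triangle \in \mathcal{O}(2)$ gets inserted at position $i$ via $\bullet_i$, and the sign conventions match term by term. Once this identification is in place, the proof reduces to applying the previous proposition, and no further computation is needed.
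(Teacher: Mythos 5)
Your proposal is correct and follows essentially the same route as the paper, which states this theorem as an immediate corollary of the preceding proposition: since both coboundaries are the canonical differentials induced by the multiplications on the respective operads, and the map $f \mapsto w^{-1}\circ f^*$ is an isomorphism of operads carrying $\pi$ to $\triangle$, it intertwines the differentials and hence induces an isomorphism on cohomology. The only caveat is that your displayed formula for the operadic coboundary has signs that do not quite match the paper's convention $d_\pi f = \pi \bullet f - (-1)^{n-1} f \bullet \pi$ expanded via the partial compositions, but this does not affect the argument.
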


Let $M$ be a bicomodule over a dendriform coalgebra $(C, \triangle_\prec, \triangle_\succ)$. Then $M$ is also a bicomodule over the associative coalgebra $(C, \triangle = \triangle_\prec + \triangle_\succ )$ with left and right coactions
\begin{align*}
\triangle^l = \triangle^l_\prec + \triangle^l_\succ ~~~ \text{ and } ~~~ \triangle^r = \triangle^r_\prec + \triangle^r_\succ.
\end{align*}

In the next theorem, we relate the cohomology of a dendriform coalgebra with the coHochschild cohomology of the corresponding associative coalgebra. See the Appendix for the coHochschild cohomology of associative coalgebras.

\begin{thm}
The map $S : C^n_{\mathrm{coDend}} (M, C) \rightarrow C^n_{\mathrm{coHoch}} (M, C) $ defined by
\begin{align*}
S(f) := f_{[1]} + \cdots + f_{[n]}
\end{align*}
commute with the differentials, that is, $\delta_{\mathrm{coHoch}} \circ S = S \circ \delta_{\mathrm{coDend}}$. Thus, it induces a map $S_* : H^n_{\mathrm{coDend}} (M, C) \rightarrow H^n_{\mathrm{coHoch}} (M, C).$ Moreover, when $M = C$, the induced map on cohomology is a morphism between Gerstenhaber algebras.
\end{thm}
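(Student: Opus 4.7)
The strategy is to unwind the formula (\ref{dend-co-diff}) for $\delta_{\mathrm{coDend}}\sigma$, apply the summation map $S$, and match each piece against the corresponding piece of the coHochschild differential of $S(\sigma)$. The three groups of terms should correspond, respectively, to the left, middle, and right summands in $\delta_c(S(\sigma))$.

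Fix $\sigma\in C^n_{\mathrm{coDend}}(M,C)$ and compute $S(\delta_{\mathrm{coDend}}\sigma)$ by summing the right-hand side of (\ref{dend-co-diff}) over all $[r]\in C_{n+1}$. For the first summand, use the explicit formulas for $R_0(2;1,n)$ and $R_2(2;1,n)$: one checks that $R_0(2;1,n)[1]=[1]$ and $R_0(2;1,n)[r]=[2]$ for $r\geq 2$, while $R_2(2;1,n)[1]=[1]+\cdots+[n]$ and $R_2(2;1,n)[r]=[r-1]$ for $r\geq 2$. Summing over $[r]$ collects the term $[r]=[1]$ as $(\mathrm{id}\otimes S(\sigma))\circ\triangle^l_\prec$, and the terms $[r]\geq 2$ combine to $(\mathrm{id}\otimes S(\sigma))\circ\triangle^l_\succ$; together these give $(\mathrm{id}\otimes S(\sigma))\circ\triangle^l$. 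The symmetric analysis with $R_0(2;n,1)$ and $R_1(2;n,1)$ handles the last summand, producing $(-1)^{n+1}(S(\sigma)\otimes\mathrm{id})\circ\triangle^r$.

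The middle terms are the technical core. For each fixed $i\in\{1,\dots,n\}$ and $[r]\in C_{n+1}$, the combinatorial maps $R_0(n;1,\dots,2,\dots,1)$ and $R_i(n;1,\dots,2,\dots,1)$ assign $R_0[r]=[r]$ and $R_i[r]=[1]+[2]$ for $r\leq i-1$; $R_0[r]=[i]$ and $R_i[r]=[r-i+1]$ for $r\in\{i,i+1\}$; and $R_0[r]=[r-1]$ and $R_i[r]=[1]+[2]$ for $r\geq i+2$. Summed over $r$, the two indices $r=i,i+1$ produce $(\mathrm{id}^{\otimes(i-1)}\otimes(\triangle_\prec+\triangle_\succ)\otimes\mathrm{id}^{\otimes(n-i)})\circ\sigma_{[i]}$, while the other ranges give $(\mathrm{id}^{\otimes(i-1)}\otimes\triangle\otimes\mathrm{id}^{\otimes(n-i)})\circ\sigma_{[k]}$ for all $k\neq i$. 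Thus the whole $i$-th block collapses to $(-1)^i(\mathrm{id}^{\otimes(i-1)}\otimes\triangle\otimes\mathrm{id}^{\otimes(n-i)})\circ S(\sigma)$, matching the $i$-th term of $\delta_c(S(\sigma))$ exactly. Combining the three contributions yields $S(\delta_{\mathrm{coDend}}\sigma)=\delta_{\mathrm{coHoch}}(S(\sigma))$.

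For the second assertion with $M=C$, the Gerstenhaber structure on both sides arises from the respective non-symmetric operad with multiplication (cf.\ the Appendix and the remark following (\ref{dend-co-diff})). I would show that $S$ assembles into a morphism of such operads: namely, repeating the combinatorial argument above with $\bullet_i$ in place of $\delta_c$ gives $S(f\bullet_i g)=S(f)\circ_i S(g)$, where $\circ_i$ is the Gerstenhaber partial composition on $\{\mathrm{Hom}(C,C^{\otimes n})\}_{n\geq 1}$; moreover $S(\triangle)=\triangle_\prec+\triangle_\succ$ is precisely the associative coproduct defining the coHochschild multiplication. Since the Gerstenhaber bracket and cup product are built functorially from $\bullet_i$ and the multiplication, $S_*$ preserves them. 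The only delicate point, and the main obstacle, is the same bookkeeping issue as in step two: verifying that the boundary values of $r$ (where $R_i[r]$ is a full sum rather than a singleton) conspire with the interior values of $r$ so that every summand $\sigma_{[k]}$, $f_{[k]}$ or $g_{[k]}$ appears exactly once with the correct prefactor. Once this combinatorial bookkeeping is spelled out cleanly, both claims follow without additional ingredients.
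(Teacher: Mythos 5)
Your proposal is correct and follows essentially the same route as the paper: the paper dismisses the first claim as "a straightforward calculation" (which you carry out explicitly, with the correct values of the maps $R_0$ and $R_i$ and the correct collapsing of the boundary terms into the full sums $\triangle^l$, $\triangle$, $\triangle^r$), and for the Gerstenhaber statement the paper likewise argues that $S$ is a morphism of operads onto $\mathrm{coEnd}_C$ carrying the dendriform multiplication to the associative coproduct $\triangle_\prec+\triangle_\succ$. No discrepancies to report.
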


\begin{proof}
The first statement is a straightforward calculation.

To prove the second part, we consider the coendomorphism operad $\text{coEnd}_C$ associated to the vector space $C$. Similar to the first part, one can show that the map
\begin{align*}
S : \mathrm{Hom} ( \mathbb{K}[C_n] \otimes C, C^{\otimes n}) ~& \rightarrow \text{coEnd}_C (n) = \text{Hom} ( C, C^{\otimes n}), \\
f ~& \mapsto f ([1], ~\_~) + \cdots + f ([n], ~ \_~ )
\end{align*}
is a morphism between operads. The dendriform structure $(\triangle_\prec, \triangle_\succ)$ on $C$ defines a multiplication on the left hand side operad whereas the associative coalgebra structure $\triangle_\prec + \triangle_\succ$ on $C$ defines a multiplication on the right hand side operad. The map $S$ preserve the corresponding multiplications. Therefore, $S$ will induces a map between Gerstenhaber algebras.
\end{proof}

\subsection{Relation with the cohomology of Rota-Baxter operators} In \cite{das3} the author introduced the cohomology of a relative Rota-Baxter operator on an associative algebra and relates with the cohomology of dendriform algebras. In this subsection, we find the dual version of the results of \cite{das3} by introducing the cohomology of a relative Rota-Baxter operator on a coalgebra.

Let $(C, \triangle)$ be an associative coalgebra and $(M, \triangle^l, \triangle^r)$ be a bicomodule.  Consider the vector space $A = C \oplus M$ and the graded Lie algebra on $\oplus_{n \geq 0} \mathrm{Hom} ( A, A^{\otimes n+1})$ induced from the coendomorphism operad $\mathrm{coEnd}_A$. We observe that $\triangle, \triangle^l , \triangle^r \in \mathrm{Hom}( A, A^{\otimes 2})$.

\begin{prop}
With the above notations, the sum $\triangle + \triangle^l + \triangle^r \in \mathrm{Hom}( A, A^{\otimes 2})$ is a Maurer-Cartan element in the graded Lie algebra on $\oplus_{n \geq 0} \mathrm{Hom} ( A, A^{\otimes n+1})$.
\end{prop}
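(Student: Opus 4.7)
The plan is to unpack what the Maurer-Cartan equation means in the graded Lie algebra coming from $\mathrm{coEnd}_A$ and then verify it by reducing to the bicomodule axioms together with the coassociativity of $\triangle$.

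First I would recall that the Gerstenhaber bracket on $\oplus_{n \geq 0} \mathrm{Hom}(A, A^{\otimes n+1})$ coming from the coendomorphism operad has the property that, for a degree-$1$ element $\mu \in \mathrm{Hom}(A, A^{\otimes 2})$, the self-bracket $[\mu,\mu]$ equals (up to a nonzero scalar) the coassociator $(\mu \otimes \mathrm{id}) \circ \mu - (\mathrm{id} \otimes \mu) \circ \mu$. Therefore the Maurer-Cartan equation $[\mu,\mu]=0$ is equivalent to $\mu$ being a coassociative coproduct on $A$. The whole proposition thus reduces to showing that $\Delta_A := \triangle + \triangle^l + \triangle^r$, viewed as an element of $\mathrm{Hom}(A, A^{\otimes 2})$ after extending each summand by zero on the complementary piece of $A = C \oplus M$, is coassociative.

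Next I would split the verification of $(\Delta_A \otimes \mathrm{id}) \circ \Delta_A = (\mathrm{id} \otimes \Delta_A) \circ \Delta_A$ according to the summand of $A$ on which one evaluates. On an element $c \in C$ only $\triangle(c) \in C \otimes C$ contributes (since $\triangle^l, \triangle^r$ vanish on $C$), and the identity is just coassociativity of $\triangle$. On an element $m \in M$ one has $\Delta_A(m) = \triangle^l(m) + \triangle^r(m) \in (C \otimes M) \oplus (M \otimes C)$. Applying $\Delta_A$ again and using that $\Delta_A$ acts as $\triangle$ on the $C$-tensor factors and as $\triangle^l + \triangle^r$ on the $M$-tensor factors (again because of the extension by zero), the left-hand side becomes
\begin{align*}
(\triangle \otimes \mathrm{id})\circ \triangle^l(m) \; + \; (\triangle^l \otimes \mathrm{id})\circ \triangle^r(m) \; + \; (\triangle^r \otimes \mathrm{id})\circ \triangle^r(m),
\end{align*}
while the right-hand side becomes
\begin{align*}
(\mathrm{id} \otimes \triangle^l)\circ \triangle^l(m) \; + \; (\mathrm{id} \otimes \triangle^r)\circ \triangle^l(m) \; + \; (\mathrm{id} \otimes \triangle)\circ \triangle^r(m).
\end{align*}
Term by term, the two sides match exactly by the three bicomodule axioms for $(M,\triangle^l,\triangle^r)$ recalled at the start of Section \ref{sec-2}.

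This proof is essentially a routine check, and I would not expect a genuine obstacle. The only care needed is in organizing the extension-by-zero convention so that the six resulting terms line up in pairs with the three bicomodule identities (and so that the $C$-component reduces cleanly to coassociativity of $\triangle$); once that bookkeeping is made explicit, the computation is immediate.
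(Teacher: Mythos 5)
Your proof is correct. The paper itself does not carry out this verification: it simply remarks that the proposition is an instance of the standard principle that a (co)algebra together with a representation is encoded by a Maurer--Cartan element in the graded Lie algebra attached to the direct sum, and defers to \cite[Proposition 2.11]{das3} for the (dual) algebra case. Your argument supplies exactly the computation the paper leaves implicit, and it is organized in the natural way: first the identification of the self-bracket of a degree-one element $\mu \in \mathrm{Hom}(A, A^{\otimes 2})$ with (twice) the coassociator, $[\mu,\mu] = 2\big( (\mu \otimes \mathrm{id}) \circ \mu - (\mathrm{id} \otimes \mu) \circ \mu \big)$, so that over a field of characteristic zero the Maurer--Cartan equation is precisely coassociativity; then the case split over $A = C \oplus M$ under the extension-by-zero convention, where the single term on $C$ is coassociativity of $\triangle$ and the three cross terms on each side pair off exactly with the three bicomodule identities recalled at the start of Section \ref{sec-2}. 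What the paper's citation buys is brevity and the placement of the statement inside a general framework (the same Maurer--Cartan/derived-bracket machinery is then used to build the differential $d_{\triangle + \triangle^l + \triangle^r}$ and the bracket $\llbracket\,,\,\rrbracket$); what your direct check buys is self-containedness and the transparent observation that the Maurer--Cartan condition for $\triangle + \triangle^l + \triangle^r$ is nothing but coassociativity of the semidirect product coproduct on $C \oplus M$. The only point to keep explicit, which you do, is that $\triangle$, $\triangle^l$, $\triangle^r$ land in the summands $C \otimes C$, $C \otimes M$, $M \otimes C$ respectively, so no spurious terms (e.g.\ valued in $M \otimes M$) arise when $\Delta_A$ is iterated.
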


This result is standard in the philosophy that a (co)algebra and a representation can be simultaneously defined by a Maurer-Cartan element in a suitably graded Lie algebra associated to the direct sum vector space. See \cite[Proposition 2.11]{das3} for instance.

This shows that $\triangle + \triangle^l + \triangle^r$ induces a differential 
\begin{align*}
d_{ \triangle + \triangle^l + \triangle^r } := [ \triangle + \triangle^l + \triangle^r,~~  \_ ~~]
\end{align*}
that makes $   (\oplus_{n \geq 0} \mathrm{Hom} (A, A^{\otimes n+1} , [~,~], d_{   \triangle + \triangle^l + \triangle^r })$ into a differential  graded Lie algebra. It is easy to see that the graded subspace $\oplus_{n \geq 0} \mathrm{Hom}(C, M^{\otimes n+1})$ is an abelian subalgebra. Therefore, by the derived bracket construction of Voronov \cite{voro} yields a graded Lie bracket on $\oplus_{n \geq 1} \mathrm{Hom} (C, M^{\otimes n})$ by 
\begin{align*}
\llbracket P, Q \rrbracket := (-1)^m ~ [ d_{ \triangle + \triangle^l + \triangle^r } (P), Q ],
\end{align*}
for $P \in \mathrm{Hom}(C, M^{\otimes m})$ and $Q \in  \mathrm{Hom}(C, M^{\otimes n})$. In particular, for $P \in \mathrm{Hom}(C, M)$ and $Q \in  \mathrm{Hom}(C, M^{\otimes n})$, the bracket is explicitly given by
\begin{align}\label{derived-brkt}
\llbracket P, Q \rrbracket =~& (Q \otimes \mathrm{id}) \circ \triangle^l \circ P - (-1)^n (\mathrm{id} \otimes Q) \circ \triangle^r \circ P \\
&- (-1)^n \big\{ \sum_{i=1}^n (-1)^i~ (\mathrm{id}_{M^{\otimes i-1}} \otimes \otimes ((\mathrm{id} \otimes P) \circ \triangle^r + ( P \otimes \mathrm{id}) \circ \triangle^l) \otimes \mathrm{id}_{M^{\otimes n-i}} ) \circ Q \nonumber \\
&+ (-1)^n (\mathrm{id} \otimes Q) \circ (P \otimes \mathrm{id} ) \circ \triangle - ( Q \otimes \mathrm{id}) \circ (\mathrm{id} \otimes P) \circ \triangle. \nonumber
\end{align}

Thus, it follows from (\ref{derived-brkt}) that a linear map $T: C \rightarrow M$ is a relative Rota-Baxter operator if and only if $\llbracket T, T \rrbracket = 0$, i.e. $T$ is a Maurer-Cartan element in the above graded Lie algebra. Therefore, a relative Rota-Baxter operator $T$ induces a differential $d_T := \llbracket T, ~ \rrbracket$ on the graded vector space $\oplus_{n \geq 1} \mathrm{Hom}(C, M^{\otimes n})$. The corresponding cohomology groups which are denoted by $H^\bullet_T (C, M)$ are called the cohomology of the relative Rota-Baxter operator $T$. Similar to \cite{das3} one can show that such cohomology governs the deformation of the relative Rota-Baxter operator $T$.

In the following, we will show that the above cohomology for a relative Rota-Baxter operator $T$ can be seen as the coHochschild cohomology of an associative coalgebra. We start with the following lemma.

\begin{lemma}
Let $T : C \rightarrow M$ be a relative Rota-Baxter operator. Then $M$ is an associative coalgebra with the coproduct $\triangle_* : M \rightarrow M \otimes M$ given by $\triangle_* = (\mathrm{id} \otimes T) \circ \triangle^r + (T \otimes \mathrm{id}) \circ \triangle^l$. Moreover, $C$ is a bicomodule of $M$ with left and right coactions $\triangle^l_* , \triangle^r_*$ given by
\begin{align*}
\triangle^l_* = (T \otimes \mathrm{id}) \circ \triangle - \triangle^r \circ T \qquad
\triangle^r_* = (\mathrm{id} \otimes T) \circ \triangle - \triangle^l \circ T.
\end{align*}
\end{lemma}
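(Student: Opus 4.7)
The plan is to reduce the first claim to what has already been established in the paper, and then handle the bicomodule axioms either by direct computation or, more conceptually, via a gauge transformation of the semi-direct product coalgebra on $C \oplus M$.

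For the coalgebra statement, note that Proposition \ref{rota-dend} already equips $M$ with a dendriform coalgebra structure whose two coproducts are precisely $\triangle_\prec^M = (\mathrm{id} \otimes T) \circ \triangle^r$ and $\triangle_\succ^M = (T \otimes \mathrm{id}) \circ \triangle^l$. Since the sum of the two coproducts of any dendriform coalgebra is coassociative (the discussion following identities (\ref{c1})--(\ref{c3})) and $\triangle_* = \triangle_\prec^M + \triangle_\succ^M$ by definition, coassociativity of $\triangle_*$ is immediate.

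For the bicomodule claim I would use the following gauge argument. Form $A := C \oplus M$ with the coproduct $\triangle_A := \triangle + \triangle^l + \triangle^r$, each summand extended by zero off its natural domain; this is coassociative precisely because $M$ is assumed to be a bicomodule over $C$. Viewing $T$ as a nilpotent endomorphism of $A$ that is zero on $M$, the map $\phi := \mathrm{id}_A + T$ is an automorphism of $A$ with inverse $\mathrm{id}_A - T$, and the conjugated coproduct $\triangle_A' := (\phi \otimes \phi) \circ \triangle_A \circ \phi^{-1}$ is again coassociative. Expanding $\triangle_A'$ and using the relative Rota-Baxter identity $(T \otimes T) \circ \triangle = (\mathrm{id} \otimes T) \circ \triangle^r \circ T + (T \otimes \mathrm{id}) \circ \triangle^l \circ T$ to absorb the sole $(T \otimes T) \circ \triangle$ term that appears, one finds
\[
\triangle_A'|_C = \triangle + \triangle^l_* + \triangle^r_*, \qquad \triangle_A'|_M = \triangle_* + \triangle^l + \triangle^r,
\]
with $\triangle_*, \triangle^l_*, \triangle^r_*$ matching exactly the maps in the statement. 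Decomposing the coassociativity of $\triangle_A'$ on elements of $C$ according to the target summands $C^{\otimes 3}, M \otimes C^{\otimes 2}, \ldots, M^{\otimes 3}$ produces precisely the three bicomodule identities required for $C$ to be a bicomodule over $(M, \triangle_*)$; the analogous decomposition on $M$ merely recovers the original $C$-bicomodule axioms together with the coassociativity of $\triangle_*$ already obtained.

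The main obstacle is bookkeeping rather than any algebraic subtlety: verifying that the expansion of $(\phi \otimes \phi) \circ \triangle_A \circ \phi^{-1}$ really collapses to the displayed form requires careful tracking of the eight terms generated by the conjugation, and the target-decomposition step requires projecting each side onto the correct tensor summand. A direct proof bypassing $A$ altogether is equally feasible and amounts to the same computation phrased differently: expand each of the three axioms (\ref{r1})-style identities for $(\triangle_*, \triangle^l_*, \triangle^r_*)$, then use coassociativity of $\triangle$, the original $M$-over-$C$ bicomodule axioms, and a single application of the Rota-Baxter identity to realise the cancellations. I would present the direct route for compactness, but mention the gauge picture because it explains structurally why the formulas for $\triangle^l_*$ and $\triangle^r_*$ take the specific form given.
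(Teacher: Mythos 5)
Your proposal is correct, but it diverges from the paper on the second half of the statement. For the coalgebra part you argue exactly as the paper does: $\triangle_*$ is the sum of the two dendriform coproducts on $M$ produced by Proposition \ref{rota-dend}, hence coassociative. For the bicomodule part, however, the paper gives no argument at all beyond asserting that the claim is dual to a lemma of \cite{das3}; you instead supply a self-contained structural proof by conjugating the semi-direct product coproduct $\triangle + \triangle^l + \triangle^r$ on $A = C \oplus M$ by the automorphism $\mathrm{id}_A + T$ and reading off components. I checked the expansion: the $M \otimes M$ component of the conjugated coproduct on $C$ is $(T \otimes T)\circ\triangle - (T \otimes \mathrm{id})\circ\triangle^l\circ T - (\mathrm{id} \otimes T)\circ\triangle^r\circ T$, which vanishes precisely by the Rota-Baxter identity, and the $M\otimes C$ and $C\otimes M$ components are $\triangle^l_*$ and $\triangle^r_*$ as claimed; the $M\otimes M\otimes C$, $M\otimes C\otimes M$ and $C\otimes M\otimes M$ components of coassociativity on $C$ are then exactly the three bicomodule axioms for $C$ over $(M,\triangle_*)$ (and the $M^{\otimes 3}$ component on $M$ even recovers coassociativity of $\triangle_*$, so your gauge picture proves both halves at once). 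Your approach buys a conceptual explanation of where the formulas for $\triangle^l_*$ and $\triangle^r_*$ come from and makes the lemma independent of the external reference; the paper's route is shorter but leaves the reader to dualize \cite[Lemma 3.1]{das3} by hand. One cosmetic caveat: your phrase ``(\ref{r1})-style identities'' is misleading, since the axioms to be checked are the three associative-bicomodule identities, not the nine dendriform ones.
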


\begin{proof}
The first part follows from the fact $(M, \triangle_\prec, \triangle_\succ )$ is a dendriform coalgebra and $\triangle_* = \triangle_\prec + \triangle_\succ$. The second part is dual to Lemma \cite[Lemma 3.1]{das3}. Hence we do not repeat it here.
\end{proof}

 It follows from the above lemma that we can consider the coHochschild cohomology of the associative coalgebra $M$ with coefficients in the bicomodule $C$. More precisely, we define $C^n_{\mathrm{coHoch}}( C, M) = \mathrm{Hom} (C, M^{\otimes n})$ for $n \geq 1$ and the differential $\delta_{\mathrm{coHoch}} : C^n_{\mathrm{coHoch}}( C, M) \rightarrow C^{n+1}_{\mathrm{coHoch}}( C, M) $ given by
 \begin{align}\label{cohoch-rota}
(\delta_{\mathrm{coHoch}} f) =~& (\mathrm{id} \otimes f) \circ (T \otimes \mathrm{id}) \circ \triangle  ~ - ~ (\mathrm{id} \otimes f) \circ \triangle^r \circ T \\
&+ \sum_{i=1}^n (-1)^i ~ \big(  \mathrm{id}_{M^{\otimes i-1}} \otimes ((\mathrm{id} \otimes T) \circ \triangle^r + ( T \otimes \mathrm{id}) \circ \triangle^l) \otimes \mathrm{id}_{M^{\otimes n-i}} ) \circ f \nonumber \\
&+ (-1)^{n+1} ( f \otimes \mathrm{id}) \circ ( \mathrm{id} \otimes T) \circ \triangle ~-~ (-1)^{n+1} ( f \otimes \mathrm{id}) \circ \triangle^l \circ T.  \nonumber
 \end{align}

It follows from (\ref{derived-brkt}) and (\ref{cohoch-rota}) that 
\begin{align*}
d_T f := \llbracket T, f \rrbracket = (-1)^n ~\delta_{\mathrm{coHoch}} (f), ~ \text{ for } f \in \mathrm{Hom}(C, M^{\otimes n}).
\end{align*}
This shows that the cohomology of the relative Rota-Baxter operator $T$ is isomorphisc to the coHochschild cohomology of the coalgebra $M$ with coefficients in the bimodule $C$.

\medskip

Let $T: C \rightarrow M$ be a relative Rota-Baxter operator and consider the corresponding dendriform coalgebra structure on $M$. In the following, we compare the cohomology of the relative Rota-Baxter operator $T$ and the cohomology of the dendriform coalgebra $M$.

For $n \geq 1$, we define maps $\Theta_n : \mathrm{Hom}(C, M^{\otimes n}) \rightarrow \mathrm{Hom} ( \mathbb{K}[C_{n+1}] \otimes M, M^{\otimes n+1})$ by
\begin{align*}
\Theta_n (f) ([r]; ~\_~) = \begin{cases} (-1)^{n+1} ( \mathrm{id} \otimes f) \otimes \triangle^r & r =1 \\
0 & 2 \leq r \leq n \\
(f \otimes \mathrm{id}) \circ \triangle^l & r = n+1.
\end{cases}
\end{align*}

It follows from the dendriform coalgebra structure on $M$ that $\Theta_1 (T)([1]; ~\_~) = \triangle_\prec$ and $\Theta_1 (T) ([2]; ~\_~) = \triangle_\succ$. In other words, $\Theta_1 (T)$ is the multiplication on the operad $\{ \mathrm{Hom}(\mathbb{K}[C_n] \otimes M, M^{\otimes n}) \}_{n \geq 1}$ associated to the dendriform coalgebra structure on $M$. Moreover, the collection $\{ \Theta_n \}$ of maps satisfying the following properties
\begin{align}\label{brkt-pre}
\Theta_{1+n} \llbracket P, Q \rrbracket = [\Theta_1 (P), \Theta_n (Q)],
\end{align}
for $P \in \mathrm{Hom}(C,M)$ and $Q \in \mathrm{Hom}(C, M^{\otimes n})$. Here on the right hand side, we use the degree $-1$ graded Lie bracket on $\oplus_{n \geq 1} \mathrm{Hom}(\mathbb{K}[C_n] \otimes M, M^{\otimes n})$ induced from the operad structure. The verification of (\ref{brkt-pre}) is a straightforward but tedious calculation. See \cite[Lemma 3.4]{das3} for the case of algebras.

As a consequence of this fact, we get the following.

\begin{prop}
The collection $\{ \Theta_n \}$ of maps induces a morphism $\Theta_* : H^\bullet_T (C, M) \rightarrow H^{\bullet +1}_{\mathrm{coDend}}(M,M)$ from the cohomology of a relative Rota-Baxter operator $T : C \rightarrow M$ to the cohomology of the induced dendriform structure on $M$.
\end{prop}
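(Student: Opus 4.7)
The plan is to show that the family $\{\Theta_n\}_{n \geq 1}$ assembles into a cochain map (up to a degree shift by one) from the complex $(\mathrm{Hom}(C, M^{\otimes \bullet}), d_T)$ computing $H^\bullet_T(C,M)$ to the complex $(C^{\bullet+1}_{\mathrm{coDend}}(M,M), \delta_c)$ computing $H^{\bullet+1}_{\mathrm{coDend}}(M,M)$, and then pass to cohomology in the standard way.

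The key inputs have essentially been assembled in the preceding discussion. First, by identity (\ref{brkt-pre}) applied with $P = T$, and using the definition $d_T Q = \llbracket T, Q\rrbracket$, we obtain
\[
\Theta_{n+1}(d_T Q) \;=\; \Theta_{n+1}\llbracket T, Q\rrbracket \;=\; [\Theta_1(T),\, \Theta_n(Q)]
\]
for every $Q \in \mathrm{Hom}(C, M^{\otimes n})$. Second, $\Theta_1(T) \in \mathrm{Hom}(\mathbb{K}[C_2] \otimes M, M^{\otimes 2})$ is precisely the multiplication $\triangle$ on the non-symmetric operad $\{\mathrm{Hom}(\mathbb{K}[C_n] \otimes M, M^{\otimes n})\}_{n\geq 1}$ associated to the induced dendriform coalgebra structure on $M$ (as noted immediately after the definition of $\Theta_n$). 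By the Remark following (\ref{dend-co-diff}), the dendriform coboundary $\delta_c$ with coefficients in the self bicomodule agrees (up to a sign depending only on cochain degree) with the bracket $[\Theta_1(T), -]$ coming from the operadic Gerstenhaber structure. Combining these two observations yields
\[
\delta_c(\Theta_n(Q)) \;=\; [\Theta_1(T),\, \Theta_n(Q)] \;=\; \Theta_{n+1}(d_T Q),
\]
so each $\Theta_n$ is a chain map up to a fixed overall sign (of the same type $(-1)^n$ that already appears in the identity $d_T = (-1)^n \delta_{\mathrm{coHoch}}$ displayed above the proposition), which is harmless at the level of cohomology.

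With the chain-map property in hand, the induced map on cohomology is defined in the usual fashion: a $T$-cocycle $f$ satisfies $d_T f = 0$, hence $\delta_c \Theta_n(f) = \pm\,\Theta_{n+1}(d_T f) = 0$, so $\Theta_n(f)$ represents a class in $H^{n+1}_{\mathrm{coDend}}(M,M)$; and a $T$-coboundary $f = d_T g$ maps to $\Theta_n(d_T g) = \pm\,\delta_c(\Theta_{n-1}(g))$, which is a dendriform coboundary, so the assignment descends to cohomology classes. This produces the desired map $\Theta_*: H^\bullet_T(C,M) \to H^{\bullet+1}_{\mathrm{coDend}}(M,M)$.

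The main obstacle is genuinely the identity (\ref{brkt-pre}) itself, which is the only non-formal input; the paper defers it to the analogous algebra calculation in \cite[Lemma 3.4]{das3}. Once (\ref{brkt-pre}) is granted, the proposition follows from the general paradigm that whenever a linear map between graded vector spaces intertwines a Maurer-Cartan element on one side with a multiplication on an operad on the other, it induces a morphism of the associated Hochschild-type cohomology theories; our situation is exactly such an instance, with the degree shift by one reflecting the fact that $\Theta_n$ raises cochain degree by one.
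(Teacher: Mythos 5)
Your argument is correct and is exactly the one the paper intends: the identity (\ref{brkt-pre}) with $P=T$, together with the observation that $\Theta_1(T)$ is the operadic multiplication whose induced differential is the dendriform coboundary $\delta_c$ on $C^\bullet_{\mathrm{coDend}}(M,M)$, shows $\Theta_\bullet$ is a chain map (the paper states the proposition as an immediate consequence of (\ref{brkt-pre}) without writing out these steps). Your handling of the signs and the passage to cohomology is fine.
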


\section{Deformations}\label{sec-4}
In this section, we study the formal deformation theory for dendriform coalgebras. Our main results in this section are analogous to the classical results of deformation theory \cite{gers}.

Let $C = (C, \triangle_\prec, \triangle_\succ)$ be a dendriform coalgebra. Consider the space $C[[t]]$ of formal power series in $t$ with coefficients from $C$. Then $C[[t]]$ is a $\mathbb{K}[[t]]$-module and when $C$ is finite dimensional, we have $C[[t]] = C \otimes_\mathbb{K} \mathbb{K}[[t]].$

\begin{defn} A formal one-parameter deformation of $C$ consists of two formal power series $\triangle_{\prec, t} = \sum_{i \geq 0} \triangle_{\prec, i} ~t^i$ and $\triangle_{\succ, t} = \sum_{i \geq 0} \triangle_{\succ, i}~ t^i$
 in which $\triangle_{\prec, 0 } =  \triangle_\prec$,~  $\triangle_{\succ, 0} = \triangle_\succ$ such that $(C[[t]], \triangle_{\prec, t}, \triangle_{\succ, t})$ is a dendriform coalgebra over $\mathbb{K}[[t]].$
 \end{defn}
 
 Thus, if $(\triangle_{\prec, t} , \triangle_{\succ, t})$ is a deformation of $C$, we must have the following identities hold
 \begin{align}
 \sum_{i+j = n}(\triangle_{\prec, i} \otimes  \text{id}) \circ \triangle_{\prec, j} =~& \sum_{i+j = n} (\text{id} \otimes (\triangle_{\prec, i} + \triangle_{\succ, i})) \circ \triangle_{\prec, j}, \label{d1} \\
\sum_{i+j = n} (\triangle_{\succ, i} \otimes \text{id}) \circ \triangle_{\prec, j} =~& \sum_{i+j = n} (\text{id} \otimes \triangle_{\prec , i}) \circ \triangle_{\succ, j}, \label{d2}\\
\sum_{i+j = n} ((\triangle_{\prec, i} + \triangle_{\succ, i} ) \otimes \text{id} ) \circ \triangle_{\succ, j} =~& \sum_{i+j = n} (\text{id} \otimes \triangle_{\succ, i} ) \circ \triangle_{\succ, j}, \label{d3}
 \end{align}
 for $n \geq 0$.
 These equations are called deformation equations.
 
 To write these system of equations in a compact form, we use the following notations. For each $i \geq 0$, define $\triangle_i \in \mathcal{O}(2) = \text{Hom} (\mathbb{K}[C_2] \otimes C, C^{\otimes 2})$ by
 \begin{align*}
 \triangle_i ([r]; ~\_ ~) = \begin{cases} \triangle_{\prec, i} ~~~ &\text{ if } ~~ [r] = [1] \\ \triangle_{\succ, i} ~~~ &\text{ if } ~~  [r] = [2]. \end{cases}
 \end{align*}
 With these notations, the system of equations (\ref{d1}), (\ref{d2}), (\ref{d3})
 are respectively corresponds to 
 \begin{align*}
 \sum_{i+j=n} (\triangle_i \bullet \triangle_j ) ([r]; ~\_~) = 0, ~~~ \text{ for } [r] = [1], [2], [3].
\end{align*} 
See the appendix for the notation.
Therefore, the deformation equations are equivalently given by $\sum_{i+j = n} \triangle_i \bullet \triangle_j = 0$, for $n \geq 0$. 

It follows from the deformation equations that $\triangle \bullet \triangle_1 + \triangle_1 \bullet \triangle = 0$. This shows that the map $\triangle_1 \in C^2_{\text{coDend}} (C, C) = \text{Hom} (\mathbb{K}[C_2] \otimes C, C^{\otimes 2})$ is a $2$-cocycle, called the infinitesimal of the deformation.

\begin{defn}
Two deformations $(\triangle_{\prec, t}, \triangle_{\succ, t})$ and $(\triangle'_{\prec, t}, \triangle'_{\succ, t})$ of a dendriform coalgebra $C$ are said to be equivalent  if there exists a formal isomorphism $\Phi_t = \sum_{i \geq 0} \Phi_i~t^i : C[[t]] \rightarrow C[[t]]$ with $\Phi_0 = \text{id}$ such that
\begin{align*}
\triangle'_{\prec, t} \circ \Phi_t =~& (\Phi_t \otimes \Phi_t) \circ \triangle_{\prec, t}, \\
\triangle'_{\succ, t} \circ \Phi_t =~& (\Phi_t \otimes \Phi_t) \circ \triangle_{\succ, t}.
\end{align*}
\end{defn}

Note that each $\Phi_i : C \rightarrow C$ can be thought of as an element in $C^1_{\text{coDend}}  (C, C) = \text{Hom} (\mathbb{K}[C_1] \otimes C, C).$ Then the above conditions  of the equivalence can be simply expressed as
\begin{align*}
\sum_{i+j=n} \triangle_i' ([r]; ~\_~) \circ \Phi_j = \sum_{i+j +k = n} ( \Phi_i \otimes \Phi_j ) \circ \triangle_k ([r]; ~\_~),
\end{align*}
for $n \geq 0$ and $[r] = [1], [2].$
The condition for $n = 0$ holds automatically as $\Phi_0 = \text{id}$. For $n = 1$, we have
\begin{align*}
\triangle_1' ([r]; ~\_~) ~+~ \triangle ([r]; ~\_~) \circ \Phi_1 =~ \triangle_1 ([r]; ~\_~) ~+ ~ (\text{id} \otimes \Phi_1 ) \circ \triangle ([r]; ~\_~) ~+~ (\Phi_1 \otimes \text{id}) \circ \triangle ([r]; ~\_~).
\end{align*}
%Thus, we have
%\begin{align*}
%\triangle_1' ([r]; ~\_~) - \triangle_1 ([r]; ~\_~) = (\delta_c \Phi_1) ([r]; ~\_~).
%\end{align*}
This shows that the difference $\triangle_1' - \triangle_1$ is a coboundary $\delta_c (\Phi_1)$. Therefore, the infinitesimals corresponding to equivalent deformations are cohomologous, hence, they gives rise to same cohomology class in $H^2_{\text{coDend}} (C, C).$ To make a one-to-one correspondence between second cohomology and infinitesimals of equivalence classes of certain deformations, we introduce the following truncated version of formal deformation.

\begin{defn}
An infinitesimal deformation of a dendriform coalgebra $C$ is a deformation of $C$ over $\mathbb{K}[[t]] / (t^2)$ (the local Artinian ring of dual numbers).
\end{defn}

In other words, an infinitesimal deformation of $C$ is given by a pair $(\triangle_{\prec, t}, \triangle_{\succ, t})$ in which $\triangle_{\prec, t} = \triangle_{\prec} + \triangle_{\prec, 1} t$ and $\triangle_{\succ, t} = \triangle_{\succ} + \triangle_{\succ, 1} t$ such that $\triangle_1 = (\triangle_{\prec, 1}, \triangle_{\succ, 1})$ defines a $2$-cocycle in the cohomology of $C$. More precisely, we have the following.

\begin{prop}
There is a one-to-one correspondence between the space of equivalence classes of infinitesimal deformations and the second cohomology group $H^2_{\mathrm{coDend}}(C, C).$\\
\end{prop}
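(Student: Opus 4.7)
The plan is to construct an explicit bijection between equivalence classes of infinitesimal deformations and $H^2_{\mathrm{coDend}}(C, C)$, by sending a deformation $(\triangle_{\prec, t}, \triangle_{\succ, t})$ with $\triangle_{\prec, t} = \triangle_\prec + \triangle_{\prec, 1} t$ and $\triangle_{\succ, t} = \triangle_\succ + \triangle_{\succ, 1} t$ to the class of its infinitesimal $\triangle_1 \in C^2_{\mathrm{coDend}}(C, C)$, where $\triangle_1([1]; \_) = \triangle_{\prec, 1}$ and $\triangle_1([2]; \_) = \triangle_{\succ, 1}$.

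First I would verify that this assignment is well-defined on the cochain level. The three defining identities of a dendriform coalgebra over $\mathbb{K}[t]/(t^2)$ coincide with the order-$0$ and order-$1$ parts of (\ref{d1})--(\ref{d3}); the order-$0$ part is automatic (it is the dendriform coalgebra structure on $C$), and the order-$1$ part reads $\triangle \bullet \triangle_1 + \triangle_1 \bullet \triangle = 0$, which is exactly the cocycle condition $\delta_c \triangle_1 = 0$ in view of the remark identifying $\delta_c$ with the operadic differential from the multiplication $\triangle$. Thus every infinitesimal deformation has infinitesimal a $2$-cocycle, and surjectivity follows immediately: given any cocycle $\triangle_1$, set $\triangle_{\prec, t} := \triangle_\prec + \triangle_1([1];\_) \, t$ and $\triangle_{\succ, t} := \triangle_\succ + \triangle_1([2];\_) \, t$; these satisfy (\ref{d1})--(\ref{d3}) modulo $t^2$ precisely because $\delta_c \triangle_1 = 0$.

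Second I would verify the equivalence relation. Given $\Phi_t = \mathrm{id} + \Phi_1 t$ with $\Phi_1 \in C^1_{\mathrm{coDend}}(C, C) = \mathrm{Hom}(C,C)$, the map $\Phi_t$ is automatically an isomorphism of $\mathbb{K}[t]/(t^2)$-modules (with inverse $\mathrm{id} - \Phi_1 t$), so the equivalence condition only imposes the order-$1$ relation already written in the text, which rearranges to $\triangle_1' - \triangle_1 = \delta_c (\Phi_1)$. In one direction this shows equivalent infinitesimal deformations have cohomologous infinitesimals, so the map descends to a well-defined map on equivalence classes; in the other direction, any cohomology $\triangle_1' - \triangle_1 = \delta_c \Phi_1$ produces an explicit equivalence $\Phi_t := \mathrm{id} + \Phi_1 t$, giving injectivity.

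The step requiring care is the matching of the order-$1$ parts of (\ref{d1})--(\ref{d3}) with the three evaluations $(\delta_c \triangle_1)([r]; \_) = 0$ for $[r] = [1], [2], [3]$; this is the same unpacking of $\delta_c$ already used in the remark showing $\delta_c \triangle = 0$, so the bijection reduces to definition-chasing and no genuine analytic obstacle arises.
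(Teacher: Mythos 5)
Your proposal is correct and follows essentially the same route as the paper, which establishes the proposition through the preceding discussion: the order-$1$ deformation equation $\triangle\bullet\triangle_1+\triangle_1\bullet\triangle=0$ is the cocycle condition, and the $n=1$ equivalence relation identifies $\triangle_1'-\triangle_1$ with $\delta_c(\Phi_1)$. Your explicit verification of surjectivity and injectivity (noting that over $\mathbb{K}[[t]]/(t^2)$ there are no higher-order conditions to check) fills in exactly the definition-chasing the paper leaves implicit.
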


\noindent {\em Rigidity.}

\medskip

In the next, we focus on analytic rigidity of dendriform coalgebras. We start with the following definition of the triviality of a deformation.

\begin{defn}
A deformation $(\triangle_{\prec, t}, \triangle_{\succ, t})$ of a dendriform coalgebra $(C, \triangle_\prec, \triangle_\succ)$ is said to be trivial if it is equivalent to the deformation $(\triangle_{\prec, t}' = \triangle_\prec,~ \triangle_{\succ, t}' = \triangle_\succ).$

\begin{lemma}
Let $(\triangle_{\prec, t}, \triangle_{\succ, t})$ be a non-trivial deformation of a dendriform coalgebra $C$. Then it is equivalent to some deformation $(\triangle_{\prec, t}',~ \triangle_{\succ, t}')$ in which $\triangle_{\prec, t}' = \triangle_\prec + \sum_{i \geq p} \triangle_{\prec, i} t^i$ and $\triangle_{\succ, t}' = \triangle_\succ + \sum_{i \geq p} \triangle_{\succ, i} t^i$, where $\triangle_p' = (\triangle_{\prec, p}', \triangle_{\succ, p}')$ is $2$-cocycle but not a coboundary.
\end{lemma}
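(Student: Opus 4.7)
The approach is the classical Gerstenhaber trick transcribed to the dendriform coalgebra side: repeatedly apply a formal automorphism of $C[[t]]$ that absorbs a coboundary infinitesimal, until the leading non-trivial coefficient of the deformation is no longer a coboundary.

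First, I would reorganize what is given. Write the deformation uniformly as an element $\triangle_t = (\triangle_{\prec,t}, \triangle_{\succ,t})$ of $\mathcal{O}(2)[[t]]$, let $n \geq 1$ be the smallest index for which $\triangle_n \neq 0$, and collect the coefficient of $t^n$ in the deformation equation $\sum_{i+j=m}\triangle_i \bullet \triangle_j = 0$ at $m = n$. This yields $\triangle \bullet \triangle_n + \triangle_n \bullet \triangle = 0$, which (since $\triangle$ is the multiplication in the operad) is precisely $\delta_c(\triangle_n) = 0$. Hence $\triangle_n$ is automatically a $2$-cocycle in $C^2_{\text{coDend}}(C,C)$.

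Second, if $\triangle_n$ is \emph{not} a coboundary, set $p = n$ and we are done. Otherwise write $\triangle_n = \delta_c(\Phi_n)$ for some $\Phi_n \in C^1_{\text{coDend}}(C,C) = \text{Hom}(C,C)$, and introduce the formal automorphism $\Phi_t := \text{id} - t^n \Phi_n$ of $C[[t]]$; since $\Phi_t$ is the identity modulo $t^n$, its inverse exists as a formal power series, namely $\Phi_t^{-1} = \text{id} + t^n \Phi_n + O(t^{n+1})$. Define the equivalent deformation
\[
\triangle'_{\prec,t} := (\Phi_t \otimes \Phi_t) \circ \triangle_{\prec,t} \circ \Phi_t^{-1}, \qquad \triangle'_{\succ,t} := (\Phi_t \otimes \Phi_t) \circ \triangle_{\succ,t} \circ \Phi_t^{-1}.
\]
Expanding order by order and extracting the coefficient of $t^n$, one gets for each $[r] \in \{[1],[2]\}$
\[
\triangle'_n([r];\_) = \triangle_n([r];\_) - (\Phi_n \otimes \text{id} + \text{id} \otimes \Phi_n) \circ \triangle([r];\_) + \triangle([r];\_) \circ \Phi_n = \triangle_n([r];\_) - \delta_c(\Phi_n)([r];\_) = 0,
\]
while all coefficients in degrees $1, \ldots, n-1$ remain zero. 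Thus the new equivalent deformation has lowest non-trivial order strictly greater than $n$.

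Third, iterate. The composition of the successive automorphisms $\Phi_t^{(k)} = \text{id} - t^{n_k} \Phi_{n_k}$ converges in the $t$-adic topology because the $k$-th factor is congruent to the identity modulo $t^{n_k}$ and the $n_k$ strictly increase. Either the process terminates at some finite $p$ with a leading cocycle that is not a coboundary, proving the lemma; or it continues indefinitely, in which case the composite equivalence sends $(\triangle_{\prec,t}, \triangle_{\succ,t})$ to the trivial deformation $(\triangle_\prec, \triangle_\succ)$, contradicting non-triviality. The only place requiring care is the $t^n$-expansion identifying the new leading term with $\triangle_n - \delta_c(\Phi_n)$; this is a routine but bookkeeping-heavy computation with the operadic labels $([r];\,\_\,)$ governed by the coboundary formula (\ref{dend-co-diff}), entirely parallel to the algebra side treated in \cite{das1}, and constitutes the main technical hurdle.
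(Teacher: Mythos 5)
Your proof is correct, and it is exactly the argument the paper intends: the paper omits the proof, deferring to the standard Gerstenhaber--Schack argument for associative (co)algebras, which is precisely what you carry out (leading coefficient is a cocycle by the order-$t^n$ deformation equation, a coboundary leading term is absorbed by the automorphism $\mathrm{id} - t^n\Phi_n$, and the iteration either stops at a non-coboundary cocycle or converges $t$-adically to an equivalence with the trivial deformation, contradicting non-triviality). The signs in your $t^n$-expansion agree with the paper's own $n=1$ computation of $\triangle_1'-\triangle_1=\delta_c(\Phi_1)$, so no gap remains.
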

\end{defn}

The proof of the above lemma is similar to the standard case of associative (co)algebra \cite{gers, gers-sch}. Hence we omit the details.
As a consequence, we get the following.

\begin{thm}
If $H^2_{\mathrm{coDend}} (C, C) = 0$ then every deformation of $C$ is equivalent to a trivial deformation.
\end{thm}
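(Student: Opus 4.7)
The plan is to proceed by contradiction and invoke the preceding lemma as the main technical tool. Suppose that $(\triangle_{\prec,t},\triangle_{\succ,t})$ is a deformation of $C$ that is \emph{not} equivalent to the trivial deformation. I would like to show this forces a nonzero class in $H^2_{\mathrm{coDend}}(C,C)$, contradicting the hypothesis.

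By the lemma just stated, non-triviality of the deformation allows us to pass to an equivalent deformation $(\triangle'_{\prec,t},\triangle'_{\succ,t})$ whose expansions begin at some order $p\geq 1$, so that $\triangle'_{\prec,t}=\triangle_\prec+\sum_{i\geq p}\triangle'_{\prec,i}t^i$ and $\triangle'_{\succ,t}=\triangle_\succ+\sum_{i\geq p}\triangle'_{\succ,i}t^i$, where the leading coefficient $\triangle'_p=(\triangle'_{\prec,p},\triangle'_{\succ,p})$, viewed as an element of $C^2_{\mathrm{coDend}}(C,C)$, is a $2$-cocycle but not a $2$-coboundary. The cocycle condition on $\triangle'_p$ is exactly the identity $\triangle\bullet\triangle'_p+\triangle'_p\bullet\triangle=0$ extracted from the order-$p$ part of the deformation equations $\sum_{i+j=n}\triangle'_i\bullet\triangle'_j=0$, using that $\triangle'_i=0$ for $0<i<p$.

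Now I apply the hypothesis: since $H^2_{\mathrm{coDend}}(C,C)=0$, every $2$-cocycle in $C^2_{\mathrm{coDend}}(C,C)$ is a $2$-coboundary. In particular $\triangle'_p$ must be a coboundary, contradicting the conclusion of the lemma. Therefore no non-trivial deformation of $C$ exists, and every deformation is equivalent to the trivial one.

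I expect no real obstacle here: the only non-formal ingredient is the preceding lemma, which itself is the standard "reduction of the leading term" argument (one inductively constructs an equivalence $\Phi_t=\mathrm{id}+\Phi_p t^p$ that removes a leading coboundary $\delta_c\Phi_p=\triangle'_p$, and iterates). Everything else is just extracting the cocycle equation from the order-$p$ piece of the associativity-type relations and invoking vanishing cohomology.
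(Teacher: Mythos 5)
Your argument is correct and coincides with the paper's (implicit) proof: the theorem is stated there as an immediate consequence of the preceding lemma, exactly via the contradiction you describe — a non-trivial deformation would produce a leading $2$-cocycle $\triangle'_p$ that is not a coboundary, which is impossible when $H^2_{\mathrm{coDend}}(C,C)=0$. Your extraction of the cocycle condition from the order-$p$ deformation equation and your sketch of the lemma's inductive mechanism are also consistent with the paper's treatment.
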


A dendriform coalgebra $C$ is said to be rigid if every deformation of $C$ is equivalent to a trivial deformation. Thus, the above theorem says that $H^2 = 0$ is a sufficient condition for the rigidity of a dendriform coalgebra.\\

\noindent {\em Extensions of deformations.}

\medskip

A deformation $(\triangle_{\prec, t}, \triangle_{\succ, t})$ of a dendriform coalgebra $C$ is called a deformation of order $n$ if $\triangle_{\prec, t} = \sum_{i=0}^n \triangle_{\prec, i}~ t^i$ and $\triangle_{\succ, t} =  \sum_{i=0}^n \triangle_{\succ, i}~ t^i$. This is equivalent to $\triangle_i = 0$, for $i \geq n+1$.

Suppose there is a map $\triangle_{n+1} = (\triangle_{\prec, n+1},~ \triangle_{\succ, n+1}) \in \text{Hom} (\mathbb{K}[C_2] \otimes C, C^{\otimes 2})$ such that $( \triangle'_{\prec, t} = \triangle_{\prec, t} + \triangle_{\prec, n+1}~ t^{n+1}, \triangle'_{\succ, t} = \triangle_{\succ, t} + \triangle_{\succ, n+1}~ t^{n+1} )$ is a deformation of order $n+1$. Then we say that $(\triangle_{\prec, t}, \triangle_{\succ, t})$ extends to a deformation of order $n+1$. In the following, we consider the problem of extending a deformation of order $n$ to a deformation of order $n+1$.

Since we assume that $(\triangle_{\prec, t}, \triangle_{\succ, t})$ is a deformation of order $n$, we have
\begin{align*}
\triangle \bullet \triangle_i + \triangle_1 \bullet \triangle_{i-1} + \cdots + \triangle_{i-1} \bullet \triangle_1 + \triangle_i \bullet \triangle = 0, ~~~ \text{ for } i= 1, \ldots, n.
\end{align*}
If $(\triangle_{\prec, t}, \triangle_{\succ, t})$ extends to a deformation of order $n+1$, one more deformation equation need to be satisfied:
\begin{align*}
\triangle \bullet \triangle_{n+1} + \triangle_1 \bullet \triangle_{n} + \cdots + \triangle_{n} \bullet \triangle_1 + \triangle_{n+1} \bullet \triangle = 0,
\end{align*}
or, equivalently, $\delta_c (\triangle_{n+1}) = - \sum_{i+j = n+1, i, j \geq 1} \triangle_i \bullet \triangle_j$. The right hand side of this equation is called the obstruction to extend the given deformation.

\begin{lemma}
The obstruction is a $3$-cocycle in the dendriform coalgebra cohomology of $C$ with coefficients in itself.
\end{lemma}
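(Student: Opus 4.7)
The plan is to recognise the statement as the standard Maurer--Cartan/Bianchi-type calculation in a differential graded Lie algebra (DGLA). Proposition~3.3, together with the multiplication $\triangle\in\mathcal{O}(2)$, equips the shifted cochain complex with a DGLA structure in which $\delta_c=[\triangle,-]$ for a graded Lie bracket $[-,-]$ obtained by antisymmetrising the partial compositions $\bullet_i$; the identity $\delta_c^2=0$ is equivalent to $[\triangle,\triangle]=0$, which in turn encodes the dendriform coalgebra axioms. Because every $\triangle_i$ has arity $2$, hence shifted degree $1$, one has $[\triangle_i,\triangle_j]=\triangle_i\bullet\triangle_j+\triangle_j\bullet\triangle_i$, and each deformation equation for $k=1,\ldots,n$ rewrites as
\[
\delta_c(\triangle_k)+\tfrac12\sum_{\substack{i+j=k\\ i,j\geq 1}}[\triangle_i,\triangle_j]=0.
\]

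Next, I would introduce the formal element $x_t:=\sum_{i=1}^{n}\triangle_i\, t^i$ and the Maurer--Cartan function $\mathcal{M}(x_t):=\delta_c(x_t)+\tfrac12[x_t,x_t]$. The order-$n$ hypothesis is then equivalent to $\mathcal{M}(x_t)\equiv 0\pmod{t^{n+1}}$, and reading off the $t^{n+1}$-coefficient of $\mathcal{M}(x_t)$ recovers exactly $\omega:=\sum_{i+j=n+1,\,i,j\geq 1}\triangle_i\bullet\triangle_j$, which is (up to sign) the obstruction of the statement; no $\delta_c(\triangle_{n+1})$ term appears since $x_t$ is truncated at order $n$.

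The key step is the Bianchi identity $\delta_c\mathcal{M}(x_t)+[x_t,\mathcal{M}(x_t)]=0$, a formal consequence of the DGLA axioms: $\delta_c^2=0$ kills one term; the graded Leibniz rule (which for odd-degree $x_t$ yields $\delta_c[x_t,x_t]=2[\delta_c x_t,x_t]$) combined with graded antisymmetry $[x_t,\delta_c x_t]=-[\delta_c x_t,x_t]$ handles the remaining bracket of $\delta_c\mathcal{M}$; and graded Jacobi forces $[x_t,[x_t,x_t]]=0$ for any odd element in characteristic zero. Since $x_t\equiv 0\pmod{t}$ and $\mathcal{M}(x_t)\equiv 0\pmod{t^{n+1}}$, the bracket $[x_t,\mathcal{M}(x_t)]$ vanishes modulo $t^{n+2}$, so extracting the $t^{n+1}$-coefficient of the Bianchi identity yields $\delta_c(\omega)=0$, completing the proof.

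The main technical hurdle is confirming the DGLA structure and keeping track of shifted sign conventions in this particular non-symmetric operad with multiplication; these are standard consequences of the framework recalled in the appendix (indeed, the Leibniz rule for $\delta_c=[\triangle,-]$ over $[-,-]$ is essentially a repackaging of graded Jacobi). A purely combinatorial alternative would expand $\delta_c(\triangle_i\bullet\triangle_j)$ using (\ref{dend-co-diff}), substitute $\delta_c(\triangle_k)=-\sum_{p+q=k,\, p,q\geq 1}\triangle_p\bullet\triangle_q$ for $k\leq n$ from the deformation equations, and cancel the resulting triple sums via the pre-Lie identity satisfied by $\bullet$; this works but is considerably more tedious than the Maurer--Cartan argument above.
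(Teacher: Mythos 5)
Your argument is correct, but it is organized quite differently from the paper's. The paper proves the lemma by the ``purely combinatorial alternative'' you sketch in your last paragraph: it applies $\delta_c$ to $-\sum_{i+j=n+1,\,i,j\geq 1}\triangle_i\bullet\triangle_j$, expands via the identity (\ref{mul-circ}) for $\delta_c(f\bullet g)$ (the cup-product terms cancel over the symmetric index set), substitutes the order-$\leq n$ deformation equations to produce the triple sum of associators $A_{p,q,r}=\triangle_p\bullet(\triangle_q\bullet\triangle_r)-(\triangle_p\bullet\triangle_q)\bullet\triangle_r$, and kills that sum using the symmetry of the associator in its last two arguments coming from the pre-Lie identity (\ref{pre-lie-iden}) -- here the sign $(-1)^{(n-1)(p-1)}$ is $-1$ since all $\triangle_i$ lie in $\mathcal{O}(2)$, so $A_{p,q,r}+A_{p,r,q}=0$. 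Your route instead packages everything into the DGLA $(\oplus_n\mathcal{O}(n),[\,,\,],\delta_c=[\triangle,-])$ and invokes the Bianchi identity for the Maurer--Cartan curvature of the truncated element $x_t$, concluding by order counting in $t$. The two proofs rest on the same underlying facts (graded Jacobi for the bracket derived from the pre-Lie product, and the derivation property of $[\triangle,-]$), but yours buys conceptual transparency and immediate generality to any deformation problem governed by a DGLA, at the cost of having to verify the shifted-degree sign conventions ($\mathcal{O}(2)$ is odd, $[x,[x,x]]=0$ in characteristic zero, $\delta_c[x,x]=2[\delta_c x,x]$); the paper's version is more elementary and self-contained given the appendix, needing only (\ref{mul-circ}) and (\ref{pre-lie-iden}) rather than the full DGLA formalism. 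Your identification of the deformation equations with the Maurer--Cartan expansion and of the obstruction with the $t^{n+1}$-coefficient of $\mathcal{M}(x_t)$ is accurate, so the argument goes through as stated.
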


\begin{proof}
We have
\begin{align*}
\delta_{c} \big(- \sum_{i+j =n+1, i, j \geq 1} \triangle_i \bullet \triangle_{j} \big) =~& - \sum_{i+j =n+1, i, j \geq 1} \big( \triangle_i \bullet \delta_{c} (\triangle_j) - \delta_{c} (\pi_i) \bullet \triangle_j \big) \qquad (\text{by } (\ref{mul-circ}))\\
=~& \sum_{p+q+r = n+1, p, q, r \geq 1} \big(  \triangle_p \bullet (\triangle_q \bullet \triangle_r) - (\triangle_p \bullet \triangle_q) \bullet \triangle_r \big)\\ =~& \sum_{p+q+r = n+1, p, q, r \geq 1} A_{p, q, r}    \qquad \mathrm{(say)}.
\end{align*}
The product $\bullet$ is not associative, however, they satisfy the pre-Lie identities (\ref{pre-lie-iden}) (see Appendix). This in particular implies that $A_{p, q, r} = 0$ whenever $q = r$. Finally, if $q \neq r$ then $A_{p, q, r} + A_{p, r, q} = 0$ again by the pre-Lie identity. Hence we have $\sum_{p+q+r = n+1, p, q, r \geq 1} A_{p, q, r} = 0$.
\end{proof}

As a corollary, we obtain the following.
\begin{thm}
If $H^3_{\mathrm{coDend}} (C, C) = 0$ then a deformation of finite order extends to a deformation of next order.
\end{thm}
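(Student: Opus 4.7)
The plan is to invoke the preceding lemma and read off the conclusion as a direct consequence of cohomology vanishing. Let $(\triangle_{\prec,t},\triangle_{\succ,t})$ be a deformation of order $n$. To extend it to order $n+1$, I need to find $\triangle_{n+1}=(\triangle_{\prec,n+1},\triangle_{\succ,n+1}) \in \mathrm{Hom}(\mathbb{K}[C_2] \otimes C, C^{\otimes 2})$ such that the single remaining deformation equation
\begin{align*}
\triangle \bullet \triangle_{n+1} + \triangle_{n+1} \bullet \triangle = - \sum_{\substack{i+j=n+1 \\ i,j \geq 1}} \triangle_i \bullet \triangle_j
\end{align*}
holds. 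Recognising the left hand side as $\delta_c(\triangle_{n+1})$ (this is precisely how the coboundary acts on a $2$-cochain via the multiplication $\triangle$ in the operad, as recorded in the earlier remark showing $\delta_c\triangle = 0$), the extension problem reduces to showing that the obstruction on the right is a coboundary.

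First I would cite the preceding lemma, which establishes that the obstruction cochain $\mathrm{Ob} := -\sum_{i+j=n+1,\, i,j\geq 1} \triangle_i \bullet \triangle_j$ is a $3$-cocycle in $C^3_{\mathrm{coDend}}(C,C)$. Under the hypothesis $H^3_{\mathrm{coDend}}(C,C)=0$, every such $3$-cocycle is a coboundary, so there exists $\triangle_{n+1} \in C^2_{\mathrm{coDend}}(C,C)$ with $\delta_c(\triangle_{n+1}) = \mathrm{Ob}$. Reading off the two components $\triangle_{\prec,n+1} := \triangle_{n+1}([1];\,\_\,)$ and $\triangle_{\succ,n+1} := \triangle_{n+1}([2];\,\_\,)$, I would set
\begin{align*}
\triangle'_{\prec,t} := \triangle_{\prec,t} + \triangle_{\prec,n+1}\, t^{n+1}, \qquad \triangle'_{\succ,t} := \triangle_{\succ,t} + \triangle_{\succ,n+1}\, t^{n+1}.
\end{align*}

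Finally I would verify that $(\triangle'_{\prec,t},\triangle'_{\succ,t})$ indeed satisfies the deformation equations $\sum_{i+j=m} \triangle'_i \bullet \triangle'_j = 0$ for all $0 \leq m \leq n+1$: the equations for $m \leq n$ already hold by assumption (nothing new is contributed at lower orders), and the equation for $m=n+1$ is exactly $\delta_c(\triangle_{n+1}) = \mathrm{Ob}$, which holds by construction. The only step requiring any care is the identification of $\triangle \bullet \triangle_{n+1} + \triangle_{n+1} \bullet \triangle$ with $\delta_c(\triangle_{n+1})$, and this is routine from the operad-with-multiplication framework of the appendix; there is no genuine obstacle once the cocycle lemma is in hand.
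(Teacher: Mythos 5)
Your proposal is correct and follows exactly the route the paper intends: the paper states this theorem as a corollary of the preceding lemma, with the remaining deformation equation already recorded there as $\delta_c(\triangle_{n+1}) = -\sum_{i+j=n+1,\,i,j\geq 1}\triangle_i\bullet\triangle_j$, so vanishing of $H^3_{\mathrm{coDend}}(C,C)$ lets one solve for $\triangle_{n+1}$ precisely as you do. Your identification of $\triangle\bullet\triangle_{n+1}+\triangle_{n+1}\bullet\triangle$ with $\delta_c(\triangle_{n+1})$ agrees with the formula $d_\pi f=\pi\bullet f-(-1)^{k-1}f\bullet\pi$ from the appendix, so there is nothing to add.
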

 
\section{Dendriform coalgebras up to homotopy}\label{sec-5}

The notion of $\text{Dend}_\infty$-algebra is a homotopy version of dendriform algebra and can be thought of like a splitting of $A_\infty$-algebras  \cite{loday-val, das1}. A coderivation interpretation of $\text{Dend}_\infty$-algebra and relation with Rota-Baxter operator on $A_\infty$-algebras has been given in \cite{das1}. Here we introduce $\text{Dend}_\infty$-coalgebras, as homotopy analogue of dendriform coalgebras. We show that they are splitting of $A_\infty$-coalgebras. Further, we introduce Rota-Baxter operator on $A_\infty$-coalgebras and show that they induce $\text{Dend}_\infty$-coalgebras. We start with the definition of $A_\infty$-coalgebras.

\begin{defn} ($A_\infty$-coalgebra) An $A_\infty$-coalgebra consists of a graded vector space $C = \sum C_i$ together with a collection of maps $\triangle_k : C \rightarrow C^{\otimes  k}$ of degree $k-2$, for $k \geq 1$, satisfying the following higher coassociative identities: for all $n \geq 1$,
\begin{align}
\sum_{r+s+t = n,~ r, t \geq  0,~ s \geq 1} ~(-1)^{rs+t}~ (\mathrm{id}^{\otimes r} \otimes \triangle_s \otimes \mathrm{id}^{\otimes t }) \circ \triangle_{r+1+t} = 0.
\end{align}
\end{defn}

$A_\infty$-coalgebras are dual to $A_\infty$-algebras introduced by Stasheff \cite{stas}. It follows from the above definition that an $A_\infty$-coalgebra $(C, \triangle_k)$ with $\triangle_k = 0$, for $k \geq 3$, is a dg-coalgebra. Moreover, if $\triangle_1 = 0$, we obtain a graded coalgebra.

We now introduce Rota-Baxter operator on $A_\infty$-coalgebras.

\begin{defn}\label{rb-inf}
Let $(C, \triangle_k)$ be an $A_\infty$-coalgebra. A Rota-Baxter operator (of weight $0$) on it consists of a degree $0$ map $R : C \rightarrow C$ satisfying
\begin{align}\label{r-b-operator}
\underbrace{(R \otimes \cdots \otimes R)}_{k\text{-times}} \circ \triangle_k =~ (\sum_{i=1}^k (R \otimes \cdots \otimes \underbrace{\text{id}}_{i\text{-th place}} \otimes \cdots \otimes R)) \circ \triangle_k \circ R,
\end{align}
for all $k \geq 1.$
\end{defn}

An $A_\infty$-coalgebra together with a Rota-Baxter operator is called a Rota-Baxter $A_\infty$-coalgebra. This notion can be thought of as homotopy analogue of Rota-Baxter coalgebra.

Now we are ready to define a homotopy version of dendriform coalgebras. The definition is inspired from the definition of $\text{Dend}_\infty$-algebra given in \cite{das1}.

\begin{defn} ($\text{Dend}_\infty$-coalgebra)
A $\text{Dend}_\infty$-coalgebra consists of a graded vector space $C = \oplus C_i$ together with a collection of graded linear maps
\begin{align*}
\{ \triangle_{k, [r]}: C \rightarrow C^{\otimes k} |~ [r] \in C_k, ~ 1 \leq k < \infty \}
\end{align*}
with deg$(\triangle_{k, [r]}) = k-2$, satisfying the following identities
\begin{align}\label{dend-co-inf}
\sum_{r+s+t = n,~ r, t \geq  0,~ s \geq 1} ~(-1)^{rs+t}~ (\mathrm{id}^{\otimes r} \otimes \triangle_{s, R_{r+1} (r+1+t; 1, \ldots, s, \ldots, 1)[\theta]} \otimes \mathrm{id}^{\otimes t }) \circ \triangle_{r+1+t, R_0 (r+1+t; 1, \ldots, s, \ldots, 1)[\theta]} = 0,
\end{align}
for all $n \geq 1$ and $[\theta] \in C_n.$
\end{defn}

Thus, a $\text{Dend}_\infty$-coalgebra is a graded vector space $C$ together with $k$ many $k$-ary cooperations $\{ \triangle_{k, [r]}: C \rightarrow C^{\otimes k} |~ [r] \in C_k \}$ labelled by the elements of $C_k$. These cooperations are supposed to satisfy the identities (\ref{dend-co-inf}).

Therefore, the degree $0$ coproducts $\triangle_\prec := \triangle_{2, [1]}$ and $\triangle_\succ := \triangle_{2, [2]}$ does not satisfy the dendriform coalgebra identities. However, they satisfy the same identities up to some terms involving  $\triangle_{3, [r]}$'s.

An equivalent definition of $\text{Dend}_\infty$-coalgebra can be given by the following. 

\begin{defn} \label{dend-1}
A $\text{Dend}_\infty[1]$-coalgebra consists of a graded vector space $V = \oplus V_i$ together with a collection of degree $-1$ graded linear maps
\begin{align*}
\{ \Delta_{k, [r]}: V \rightarrow V^{\otimes k} |~ [r] \in C_k, ~ 1 \leq k < \infty \}
\end{align*}
satisfying the following identities
\begin{align}
\sum_{r+s+t = n,~ r, t \geq  0,~ s \geq 1} ~(\mathrm{id}^{\otimes r} \otimes \Delta_{s, R_{r+1} (r+1+t; 1, \ldots, s, \ldots, 1)[\theta]} \otimes \mathrm{id}^{\otimes t }) \circ \Delta_{r+1+t, R_0 (r+1+t; 1, \ldots, s, \ldots, 1)[\theta]} = 0,
\end{align}
for all $n \geq 1$ and $[\theta] \in C_n.$
\end{defn}

It is easy to see that if $(A, \triangle_{k, [r]})$ is a $\text{Dend}_\infty$-coalgebra, then $V = s^{-1}A$, the desuspension of $A$, can be equipped with a  $\text{Dend}_\infty [1]$-coalgebra structure and vice versa. The Definition \ref{dend-1} is simple in the sense that all graded maps have same degree and there is no sign after the summation.

Next, we give another equivalent description of a $\text{Dend}_\infty$-coalgebra in terms of a square-zero derivation in a graded diassociative algebra. This is dual to the construction given in \cite{das1}. Let us first recall the following definition from \cite{loday}. 

\begin{defn}A graded diassociative algebra is a graded vector space $D = \oplus D_i$ together with degree zero maps $\dashv, \vdash : D^{\otimes 2} \rightarrow D$ that satisfies
\begin{align*}
& a \dashv (b \dashv c) = (a \dashv b) \dashv c = a \dashv (b \vdash c),\\
& (a \vdash b) \dashv c = a \vdash (b \dashv c),\\
& (a \dashv b) \vdash c = a \vdash (b \vdash c) = (a \vdash b ) \vdash c, ~~~ \text{ for all } a, b , c \in D.
\end{align*}
\end{defn}

A derivation (of degree $d$) in a graded diassociative algebra $(D, \dashv, \vdash)$ is a map $\partial : D \rightarrow D$ of degree $d$ which satisfies
\begin{align*}
\partial \circ \dashv ~=~ \dashv \circ ( \text{id} \otimes \partial + \partial \otimes \text{id} )   ~~~ \text{ and } ~~~ \partial \circ \vdash ~=~ \vdash \circ ( \text{id} \otimes \partial + \partial \otimes \text{id} ).
\end{align*}
We denote the set of all derivations in a graded diassociative algebra by $\text{Der}(D).$

Let $V$ be a graded vector space. Consider the free diassociative algebra $\text{Diass}(V) = TV \otimes V \otimes TV$ over $V$
with coproducts
\begin{align*}
(v_{-n} \cdots v_{-1} \otimes v_0 \otimes v_1 \ldots v_m ) \dashv~& (w_{-p} \cdots w_{-1} \otimes w_{0} \otimes w_1 \cdots w_q ) \\
=~& v_{-n} \cdots v_{-1} \otimes v_0 \otimes v_1 \cdots v_m w_{-p} \cdots w_{-1} w_0 w_1 \cdots w_q,\\
(v_{-n} \cdots v_{-1} \otimes v_0 \otimes v_1 \ldots v_m ) \vdash~& (w_{-p} \cdots w_{-1} \otimes w_{0} \otimes w_1 \cdots w_q ) \\
=~& v_{-n} \cdots v_{-1} v_0 v_1 \ldots v_m w_{-p} \cdots w_{-1} \otimes w_{0} \otimes w_1 \cdots w_q.
\end{align*}
See \cite{loday} for more details. Note that for any $k \geq 1$, there are $k$ many maps from $V^{\otimes k} \rightarrow TV \otimes V \otimes TV$ given by
\begin{align*}
\Pi_1 (v_1 \cdots v_k ) =~& 1 \otimes v_1 \otimes v_2 \cdots v_k,\\
\Pi_2  (v_1 \cdots v_k ) =~& v_1 \otimes v_2 \otimes v_3 \cdots v_k,\\
~& \vdots \\
\Pi_k (v_1 \cdots v_k ) =~& v_1 \cdots v_{k-1} \otimes v_k \otimes 1.
\end{align*}
Thus for any maps $\Delta_{k, [1]}, \ldots, \Delta_{k, [k]} : V \rightarrow V^{\otimes k}$ of degree $-1$, we can define a new map
$\Delta_k : V \rightarrow TV \otimes V \otimes TV $ by $\Delta_k = \Pi_1 \circ \Delta_{k, [1]} + \cdots + \Pi_k \circ \Delta_{k, [k]}$. Then $\Delta_k$ extends to a derivation $\widetilde{\Delta_k} : TV \otimes V \otimes TV \rightarrow TV \otimes V \otimes TV$ on the graded diassociative algebra $\text{Diass}(V) = TV \otimes V \otimes TV$. More precisely, $ \widetilde{\Delta_k}$ is given by
\begin{align*}
\widetilde{\Delta_k} (v_1 \cdots v_n \otimes v_{n+1} \otimes v_{n+2} \cdots v_{n+1+m}) 
= \sum_{i} ~(-1)^{|v_1|+ \cdots + |v_{i-1}|} ~v_1 \cdots v_{i-1} \Delta_k (v_i) v_{i+1} \cdots v_{n+1+m}.
\end{align*}

With these notations, we have the following, the proof of which is dual to \cite[Lemma 4.5]{das1}.
\begin{lemma}\label{lemm}
For any $s , l \geq 1$ with $s + l = n+1$ and for any $1 \leq \theta \leq n$,
the image of $(\widetilde {\Delta_s} \circ \widetilde{\Delta_l} ) ( 1 \otimes v \otimes 1)$ inside $V^{\otimes(\theta -1)} \otimes V \otimes V^{\otimes (n - \theta)}$ is given by
% we have
\begin{align*}
%(\widetilde{\Delta_s} \circ \widetilde{\Delta_l} )~& ( 1 \otimes v \otimes 1) \\
 \sum_{r+1+t = l} (\mathrm{id}^{\otimes r} \otimes \Delta_{s, R_{r+1}(r+1+t; 1, \ldots, s, \ldots, 1)[\theta]} \otimes \mathrm{id}^{\otimes t} ) \circ \Delta_{l, R_0 (r+1+t; 1, \ldots, s, \ldots, 1)[\theta]} (v).
\end{align*}
\end{lemma}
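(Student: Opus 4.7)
The plan is a direct combinatorial unfolding, completely dual to the argument in \cite[Lemma 4.5]{das1}. Since $\widetilde{\Delta_l}$ acts on the simple tensor $1 \otimes v \otimes 1$ by $\Delta_l$, we obtain $\widetilde{\Delta_l}(1 \otimes v \otimes 1) = \Delta_l(v) = \sum_{i=1}^{l} \Pi_i(\Delta_{l,[i]}(v))$, which is a sum of $l$ terms in $TV \otimes V \otimes TV$ whose $i$-th summand has its distinguished tensor slot in position $i$ of $V^{\otimes l}$.

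Next, apply $\widetilde{\Delta_s}$ to each summand by the Leibniz rule. Writing $\Pi_i(\Delta_{l,[i]}(v))$ as an iterated diassociative product $(1\otimes v_1\otimes 1)\vdash \cdots \vdash (1\otimes v_i\otimes 1)\dashv\cdots\dashv (1\otimes v_l\otimes 1)$, the derivation produces, for each $j \in \{1,\ldots,l\}$, a sub-term in which the factor $1\otimes v_j\otimes 1$ is replaced by $\Delta_s(v_j) = \sum_{p=1}^{s} \Pi_p(\Delta_{s,[p]}(v_j))$. Flattening the resulting element of $TV\otimes V\otimes TV$ via the identities $(A\otimes B\otimes C)\vdash(A'\otimes B'\otimes C')=(ABCA')\otimes B'\otimes C'$ and $(A\otimes B\otimes C)\dashv(A'\otimes B'\otimes C')=A\otimes B\otimes(CA'B'C')$ shows that the new distinguished slot is determined by the relative positions of $i,j,p$: if $j<i$, the distinguished remains $v_i$ and slides to position $i+s-1$, with all $p\in\{1,\ldots,s\}$ contributing equally; if $j>i$, the distinguished remains $v_i$ at position $i$, again with all $p$ contributing; and if $j=i$, the distinguished becomes the $p$-th entry of $\Delta_{s,[p]}(v_i)$ at position $i+p-1$, so only one $p$ survives.

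Now fix $\theta\in\{1,\ldots,n\}$ and project onto the component $V^{\otimes(\theta-1)}\otimes V\otimes V^{\otimes(n-\theta)}$. Reparametrising $r:=j-1$ and $t:=l-j$ (so $r+1+t=l$), the three cases above match precisely the three piecewise branches of the maps $R_0$ and $R_{r+1}$ applied to $[\theta]$: the case $j<i$ gives $\theta\geq r+s+1$, where $R_0[\theta]=[\theta-s+1]=[i]$ and $R_{r+1}[\theta]=[1]+\cdots+[s]$, so the sum over $p$ is encoded in the single application of $\Delta_{s,R_{r+1}[\theta]}$; the case $j>i$ gives $\theta\leq r$ with $R_0[\theta]=[\theta]=[i]$ and the same $R_{r+1}[\theta]$; and the case $j=i$ gives $r+1\leq\theta\leq r+s$ with $R_0[\theta]=[r+1]=[i]$ and $R_{r+1}[\theta]=[\theta-r]=[p]$, singling out one index. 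Collecting terms over all $(i,j,p)$ and rewriting in the variables $(r,s,t)$ reproduces the claimed formula.

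The main obstacle is the careful bookkeeping in the flattening step: one must verify that iterated $\vdash$- and $\dashv$-products absorb the inner distinguished element of each $\Pi_p(\Delta_{s,[p]}(v_j))$ into the appropriate outer $TV$-factor in exactly the way described, and that the Koszul signs $(-1)^{|v_1|+\cdots+|v_{j-1}|}$ arising from the derivation formula are compatible with the sign-free right-hand side. The latter is standard once one invokes that every $\Delta_{k,[r]}$ has degree $-1$ in the $\text{Dend}_\infty[1]$-convention of Definition~\ref{dend-1}, so that all Koszul contributions enter with a uniform sign that can be absorbed.
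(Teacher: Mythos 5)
Your proof is correct and follows exactly the route the paper intends: the paper gives no argument for this lemma beyond asserting that it is dual to \cite[Lemma 4.5]{das1}, and your direct unfolding --- the Leibniz expansion of $\widetilde{\Delta_s}$, the flattening rules for $\vdash$ and $\dashv$, and the case analysis on the relative positions of $i$, $j$, $p$ matched against the three branches of $R_0$ and $R_{r+1}$ --- is precisely that dual bookkeeping carried out in full. One small precision on your last paragraph: the sign $(-1)^{|v_1|+\cdots+|v_{j-1}|}$ coming from the derivation is not merely ``uniform and absorbable''; it is exactly the Koszul sign implicit in evaluating $\mathrm{id}^{\otimes(j-1)}\otimes \Delta_s \otimes \mathrm{id}^{\otimes(l-j)}$ on $v_1\otimes\cdots\otimes v_l$ (since each $\Delta_{s,[p]}$ has degree $-1$ in the $\mathrm{Dend}_\infty[1]$ convention), so the two sides agree term by term with no residual sign.
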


Thus we obtain the following interpretation of a $\text{Dend}_\infty$-coalgebra.

\begin{thm}
Let $V$ be a graded vector space and $\{ \Delta_{k, [r]} : V \rightarrow V^{\otimes k } | ~[r] \in C_k, ~ 1 \leq k < \infty \} $ be a sequence of maps of degree $-1$. Consider the derivation $D = \sum_{k \geq 1} \widetilde{\Delta_k} \in \mathrm{Der} (\mathrm{Diass}(V)).$ Then $(V, \Delta_{k, [k]} )$ is a $\mathrm{Dend}_\infty [1]$-coalgebra if and only if $D \circ D = 0$.
\end{thm}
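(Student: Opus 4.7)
The plan is to reduce the statement $D \circ D = 0$ to checking on the generators $1 \otimes v \otimes 1$ of the free graded diassociative algebra, and then to match each homogeneous component with one of the defining identities of a $\mathrm{Dend}_\infty[1]$-coalgebra by invoking the preceding lemma.

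First I would observe that $D = \sum_{k \geq 1} \widetilde{\Delta_k}$ is an odd derivation of $\mathrm{Diass}(V)$, because each $\widetilde{\Delta_k}$ has degree $-1$ by construction. Hence $D \circ D = \tfrac{1}{2}[D,D]$ is again a (degree $-2$) derivation of the graded diassociative algebra $\mathrm{Diass}(V) = TV \otimes V \otimes TV$. Since derivations of the free graded diassociative algebra are determined by their restriction to the generating subspace $1 \otimes V \otimes 1 \cong V$, we have $D \circ D = 0$ if and only if $(D \circ D)(1 \otimes v \otimes 1) = 0$ for every $v \in V$. This is a standard reduction and collapses the global condition to a condition tested on $V$.

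Next I would expand $(D\circ D)(1\otimes v\otimes 1) = \sum_{s,l\geq 1} (\widetilde{\Delta_s}\circ \widetilde{\Delta_l})(1\otimes v\otimes 1)$ and filter by the total arity $n$, i.e.\ by the pieces landing in the subspace of $TV\otimes V\otimes TV$ of total tensor-length $n$. Since each $\widetilde{\Delta_k}$ raises tensor length by $k-1$, the arity-$n$ part comes exactly from those pairs $(s,l)$ with $s+l = n+1$, and it decomposes further according to which slot of $TV \otimes V \otimes TV$ carries the distinguished central factor; this decomposition is indexed by $\theta \in \{1,\dots,n\}$ corresponding to the splitting $V^{\otimes(\theta-1)} \otimes V \otimes V^{\otimes(n-\theta)}$.

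Now I would apply Lemma \ref{lemm}: the projection of $(\widetilde{\Delta_s}\circ\widetilde{\Delta_l})(1\otimes v\otimes 1)$ onto the slot indexed by $\theta$ is exactly
\[
\sum_{r+1+t=l} (\mathrm{id}^{\otimes r} \otimes \Delta_{s, R_{r+1}(r+1+t;1,\dots,s,\dots,1)[\theta]} \otimes \mathrm{id}^{\otimes t}) \circ \Delta_{l,R_0(r+1+t;1,\dots,s,\dots,1)[\theta]}(v).
\]
Summing over all $(s,l)$ with $s+l = n+1$, the $\theta$-component of $(D\circ D)(1\otimes v\otimes 1)$ is precisely the left-hand side of the $\mathrm{Dend}_\infty[1]$-identity of Definition \ref{dend-1} evaluated at $[\theta]\in C_n$. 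Consequently the vanishing of all such components, for every $n\geq 1$ and every $\theta$, is equivalent to the full family of $\mathrm{Dend}_\infty[1]$-relations, yielding the claimed equivalence.

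The only genuinely non-trivial step is the combinatorial matching of the Lemma with the relation, but since Lemma \ref{lemm} is already stated, the main obstacle has been absorbed; the rest is bookkeeping on the derivation extension and on the arity filtration.
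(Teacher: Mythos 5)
Your proof is correct and follows essentially the same route as the paper: decompose $D \circ D$ by arity into the sums $\sum_{s+l=n+1}\widetilde{\Delta_s}\circ\widetilde{\Delta_l}$ and identify each component via Lemma \ref{lemm} with the corresponding $\mathrm{Dend}_\infty[1]$-identity. You in fact make explicit a step the paper leaves implicit, namely that $D\circ D$ is itself a derivation of the free graded diassociative algebra and hence vanishes as soon as it vanishes on the generators $1\otimes v\otimes 1$.
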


\begin{proof}
The condition $D \circ D = 0$ is equivalent to
\begin{align*}
\widetilde{\Delta_1} \circ \widetilde{\Delta_n} + \widetilde{\Delta_2} \circ \widetilde{\Delta_{n-1}} + \cdots + \widetilde{\Delta_n} \circ \widetilde{\Delta_1} = 0, ~~~ \text{ for all } ~~ n \geq 1.
\end{align*}
This is also same as $ \sum_{s+l = n+1} \widetilde{\Delta_s} \circ \widetilde{\Delta_l} = 0$.
Hence the result follows from Lemma \ref{lemm}.\\
\end{proof}

It has been shown in section \ref{sec-2} that dendriform coalgebras are splitting of associative coalgebras. Here we prove a homotopy version of this result.

\begin{thm}
Let $(C, \triangle_{k, [r]})$ be a $\text{Dend}_\infty$-coalgebra. Then $(C, \triangle_k)$ is an $A_\infty$-coalgebra where
\begin{align*}
\triangle_k = \triangle_{k, [1]} + \cdots + \triangle_{k, [k]}, ~ \text{ for } 1 \leq k < \infty.
\end{align*}
\end{thm}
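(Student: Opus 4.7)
The plan is to derive the $A_\infty$-coalgebra identity for $\triangle_k = \sum_{[r] \in C_k} \triangle_{k,[r]}$ by summing the defining $\text{Dend}_\infty$-coalgebra identity (\ref{dend-co-inf}) over all labels $[\theta] \in C_n$. After expanding $\triangle_k$ as a sum of $\triangle_{k,[r]}$, the target identity becomes
\begin{align*}
\sum_{r+s+t=n}\sum_{[r']\in C_{r+1+t}}\sum_{[s']\in C_s}(-1)^{rs+t}~(\mathrm{id}^{\otimes r}\otimes \triangle_{s,[s']}\otimes\mathrm{id}^{\otimes t})\circ \triangle_{r+1+t,[r']}=0,
\end{align*}
so it is enough to show that the left-hand side of this equation is exactly what appears when one forms $\sum_{[\theta] \in C_n}$ of (\ref{dend-co-inf}).

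The key step is to analyze, for each fixed triple $(r,s,t)$ with $r+s+t=n$, the action of $R_0$ and $R_{r+1}$ from the combinatorial definitions. Setting $m=r+1+t$ and using the explicit formulas, one splits the sum over $[\theta] \in C_n$ into three regimes: for $1\le\theta\le r$ one has $R_0[\theta]=[\theta]$ and $R_{r+1}[\theta]=[1]+\cdots+[s]$, so by linearity in the label the term equals $(\mathrm{id}^{\otimes r}\otimes\triangle_s\otimes\mathrm{id}^{\otimes t})\circ\triangle_{r+1+t,[\theta]}$; for $r+1\le\theta\le r+s$ one has $R_0[\theta]=[r+1]$ and $R_{r+1}[\theta]=[\theta-r]$, and summing over these $s$ values of $\theta$ reassembles $\triangle_{s,[1]}+\cdots+\triangle_{s,[s]}=\triangle_s$ acting on $\triangle_{r+1+t,[r+1]}$; for $r+s+1\le\theta\le n$ one has $R_0[\theta]=[\theta-s+1]$ and $R_{r+1}[\theta]=[1]+\cdots+[s]$, producing terms $(\mathrm{id}^{\otimes r}\otimes\triangle_s\otimes\mathrm{id}^{\otimes t})\circ\triangle_{r+1+t,[\theta']}$ with $\theta'$ running over $\{r+2,\dots,r+1+t\}$. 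Collecting the three ranges, each $[r'] \in C_{r+1+t}$ is hit exactly once, and one obtains $(\mathrm{id}^{\otimes r}\otimes\triangle_s\otimes\mathrm{id}^{\otimes t})\circ\triangle_{r+1+t}$. Since the sign $(-1)^{rs+t}$ is independent of $[\theta]$, the summed identity is precisely the $A_\infty$-coalgebra relation.

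The only obstacle is combinatorial bookkeeping, specifically checking that the three disjoint $\theta$-ranges partition $\{1,\dots,n\}$ and together recover the full sum $\sum_{[r']\in C_{r+1+t}}\triangle_{r+1+t,[r']}$, with the middle range being responsible for reassembling $\triangle_s$ out of its components $\triangle_{s,[k]}$. This bookkeeping is entirely parallel to the dendriform-algebra case in \cite{das1}, and the same argument in fact shows that the result is dual to the splitting statement for $\text{Dend}_\infty$-algebras.
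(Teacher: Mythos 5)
Your proposal is correct and follows essentially the same route as the paper: sum the defining identity (\ref{dend-co-inf}) over all $[\theta]\in C_n$, split the sum into the three ranges $1\le\theta\le r$, $r+1\le\theta\le r+s$, $r+s+1\le\theta\le n$ according to the explicit formulas for $R_0$ and $R_{r+1}$, and observe that the three pieces reassemble $(\mathrm{id}^{\otimes r}\otimes\triangle_s\otimes\mathrm{id}^{\otimes t})\circ\triangle_{r+1+t}$. Your bookkeeping of the label images (including the shift $\theta\mapsto\theta-s+1$ in the third range) matches the paper's computation exactly.
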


\begin{proof}
Since $(C, \triangle_{k, [r]})$ is a $\text{Dend}_\infty$-coalgebra, we have (\ref{dend-co-inf}) holds, for all $n \geq 1$ and $[\theta] \in C_n$. Adding up these relations for $[\theta] = [1], \ldots, [n]$, we get
\begin{align}\label{dend-inf-spl}
\sum_{r+s+t = n,~ r, t \geq  0,~ s \geq 1} ~(-1)^{rs+t}~ \sum_{\theta = 1}^n ~(\mathrm{id}^{\otimes r} \otimes \triangle_{s, R_{r+1} (r+1+t; 1, \ldots, s, \ldots, 1)[\theta]} \otimes \mathrm{id}^{\otimes t }) \circ \triangle_{r+1+t, R_0 (r+1+t; 1, \ldots, s, \ldots, 1)[\theta]} = 0.
\end{align}
For any fixed $r, s, t$, we may write
\begin{align*}
\sum_{\theta = 1}^n = \sum_{\theta = 1}^r + \sum_{\theta = r+1}^{r+s} + \sum_{\theta = r+s+1}^{r+s+t}.
\end{align*}
Observe that
\begin{align}\label{eqn-p}
&\sum_{\theta = 1}^r ~(\mathrm{id}^{\otimes r} \otimes \triangle_{s, R_{r+1} (r+1+t; 1, \ldots, s, \ldots, 1)[\theta]} \otimes \mathrm{id}^{\otimes t }) \circ \triangle_{r+1+t, R_0 (r+1+t; 1, \ldots, s, \ldots, 1)[\theta]} \nonumber \\
~&= (\text{id}^{\otimes r} \otimes \triangle_s \otimes \text{id}^{\otimes t} ) \circ \triangle_{r+1+t, [1]+\cdots + [r]}.
\end{align}
Similarly,
\begin{align}\label{eqn-q}
&\sum_{\theta = r+1}^{r+s}~ (\mathrm{id}^{\otimes r} \otimes \triangle_{s, R_{r+1} (r+1+t; 1, \ldots, s, \ldots, 1)[\theta]} \otimes \mathrm{id}^{\otimes t }) \circ \triangle_{r+1+t, R_0 (r+1+t; 1, \ldots, s, \ldots, 1)[\theta]} \nonumber \\
~&= (\text{id}^{\otimes r} \otimes \triangle_{s, [1]+ \cdots + [s]} \otimes \text{id}^{\otimes t} ) \circ \triangle_{r+1+t, [r+1]} \nonumber \\
~&= (\text{id}^{\otimes r} \otimes \triangle_{s} \otimes \text{id}^{\otimes t} ) \circ \triangle_{r+1+t, [r+1]}
\end{align}
and
\begin{align}\label{eqn-r}
&\sum_{\theta = r+s+1}^{r+s+t} ~ (\mathrm{id}^{\otimes r} \otimes \triangle_{s, R_{r+1} (r+1+t; 1, \ldots, s, \ldots, 1)[\theta]} \otimes \mathrm{id}^{\otimes t }) \circ \triangle_{r+1+t, R_0 (r+1+t; 1, \ldots, s, \ldots, 1)[\theta]} \nonumber \\
~&= (\text{id}^{\otimes r} \otimes \triangle_{s} \otimes \text{id}^{\otimes t} ) \circ \triangle_{r+1+t, [r+2] + \cdots + [r+1+t]}.
\end{align}
Hence, by adding (\ref{eqn-p}), (\ref{eqn-q}) and (\ref{eqn-r}), we get
\begin{align*}
\sum_{\theta=1}^n = (\text{id}^{\otimes r} \otimes \triangle_{s} \otimes \text{id}^{\otimes t} ) \circ \triangle_{r+1+t}.
\end{align*}
Therefore, the result follows by substituting this into the identity (\ref{dend-inf-spl}).
\end{proof}

Like Rota-Baxter coalgebras give rise to dendriform coalgebras (Proposition \ref{rota-dend}), Rota-Baxter $A_\infty$-coalgebras give rise to $\text{Dend}_\infty$-coalgebras.

\begin{thm}
Let $(C, \triangle_k)$ be an $A_\infty$-coalgebra and $R$ be a Rota-Baxter operator on it (Definition \ref{rb-inf}). Then the graded vector space $C$ inherits a $\text{Dend}_\infty$-coalgebra structure with coproducts
\begin{align*}
\triangle_{k, [r]} = (R \otimes  \cdots \otimes \underbrace{\mathrm{id}}_{r\mathrm{\text-th~ place}} \otimes \cdots \otimes R) \circ \triangle_k, ~~ \text{ for } k \geq 1 \text{  and }  [r] \in C_k.
\end{align*}

\end{thm}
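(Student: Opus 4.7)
The plan is to substitute the defined coproducts $\triangle_{k,[r]}$ into the $\mathrm{Dend}_\infty$-coalgebra identity (\ref{dend-co-inf}), fix a tag $[\theta] \in C_n$, and show that every summand simplifies to the common form
\[
(-1)^{rs+t}\bigl(R^{\otimes(\theta-1)} \otimes \mathrm{id} \otimes R^{\otimes(n-\theta)}\bigr) \circ \bigl(\mathrm{id}^{\otimes r} \otimes \triangle_s \otimes \mathrm{id}^{\otimes t}\bigr) \circ \triangle_{r+1+t}.
\]
Once this reduction is in place, one factors out the prefix $R^{\otimes(\theta-1)} \otimes \mathrm{id} \otimes R^{\otimes(n-\theta)}$; the remaining sum over $r+s+t=n$ is exactly the higher coassociativity relation of the $A_\infty$-coalgebra $(C,\triangle_k)$, which vanishes by hypothesis.

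The reduction splits into three cases according to where $\theta$ lies relative to the block $\{r+1,\dots,r+s\}$ singled out by the combinatorial maps $R_0$ and $R_{r+1}$. When $r+1 \leq \theta \leq r+s$, one has $R_0(r+1+t;1,\dots,s,\dots,1)[\theta] = [r+1]$ and $R_{r+1}(r+1+t;1,\dots,s,\dots,1)[\theta] = [\theta-r]$, so the outer coproduct places $\mathrm{id}$ at slot $r+1$ (which is precisely where $\triangle_s$ is subsequently applied) and the inner coproduct places $\mathrm{id}$ at slot $\theta-r$ among the newly created $s$ factors; a re-indexing of positions gives $\mathrm{id}$ at overall slot $\theta$ and $R$ on all other slots. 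When $\theta \leq r$ or $\theta \geq r+s+1$, however, $R_{r+1}(\ldots)[\theta] = [1]+\cdots+[s]$, so the inner factor becomes $\sum_{i=1}^{s} \triangle_{s,[i]}$ while the outer coproduct still carries $R$ on its $(r+1)$-th slot (which feeds into $\triangle_s$). The Rota-Baxter identity (\ref{r-b-operator}) for $k=s$, rewritten as $\sum_{i=1}^{s} \triangle_{s,[i]} \circ R = R^{\otimes s} \circ \triangle_s$, then collapses the inner sum and distributes one copy of $R$ onto each of the $s$ new factors, once again producing the ``$R$ on every slot except $\theta$'' pattern.

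Assembling the three cases, the left-hand side of (\ref{dend-co-inf}) with $[\theta]$ fixed becomes
\[
\bigl(R^{\otimes(\theta-1)} \otimes \mathrm{id} \otimes R^{\otimes(n-\theta)}\bigr) \circ \sum_{r+s+t=n,~r,t\geq 0,~s\geq 1} (-1)^{rs+t} \bigl(\mathrm{id}^{\otimes r} \otimes \triangle_s \otimes \mathrm{id}^{\otimes t}\bigr) \circ \triangle_{r+1+t},
\]
and the bracketed sum vanishes by the defining axiom of the $A_\infty$-coalgebra $(C,\triangle_k)$. The main obstacle is the careful bookkeeping in the two ``outside'' cases: one must track at every stage of the composition which tensor slot carries $R$ and which carries $\mathrm{id}$, and verify that the copy of $R$ supplied by the outer coproduct on the soon-to-be-split slot is precisely the one consumed by the Rota-Baxter identity when converting $\sum_i \triangle_{s,[i]}$ into $R^{\otimes s} \circ \triangle_s$. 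Since $R$ has degree zero, no Koszul signs arise from moving it past the graded $\triangle_k$'s, so the sign $(-1)^{rs+t}$ survives untouched and matches the one in the $A_\infty$-coalgebra axiom.
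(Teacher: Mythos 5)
Your proposal is correct and follows essentially the same route as the paper: the paper likewise fixes $[\theta]$, composes the $A_\infty$-coalgebra relation with $R^{\otimes(\theta-1)}\otimes\mathrm{id}\otimes R^{\otimes(n-\theta)}$, and identifies each summand with the corresponding $\mathrm{Dend}_\infty$ term via the same three-case split on the position of $\theta$ relative to the block $\{r+1,\dots,r+s\}$, invoking the Rota--Baxter identity $(\sum_i \triangle_{s,[i]})\circ R = R^{\otimes s}\circ\triangle_s$ in the two outside cases. The only difference is the direction in which the term-by-term identification is read, which is immaterial.
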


\begin{proof}
Since $R$ is a Rota-Baxter operator on $(C, \triangle_k)$, we have from (\ref{r-b-operator}) that
\begin{align*}
(R \otimes \cdots \otimes R) \circ \triangle_k = (\sum_{i=1}^k \triangle_{k, [i]}) \circ R.
\end{align*}
Let $1 \leq \theta \leq n$ be fixed. Since $(C, \triangle_k)$ is an $A_\infty$-coalgebra, we have
\begin{align}\label{ro-dend}
\sum_{} (-1)^{rs+t}~ (R \otimes \cdots \otimes \overbrace{\text{id}}^{\theta\text{-th place}} \otimes \cdots \otimes R) \circ (\text{id}^{\otimes r} \otimes \triangle_s \otimes \text{id}^{\otimes t}) \circ \triangle_{r+1+t} = 0.
\end{align}
For any tuple $(r, s, t)$ with $r +s +t = n$, we have either $1 \leq \theta \leq r$ or $r+1 \leq \theta \leq r+s$ or $r+s+1 \leq \theta \leq r+s+t =n$. When $1 \leq \theta \leq r$, we have
\begin{align*}
&(R \otimes \cdots \otimes \overbrace{\text{id}}^{\theta\text{-th place}} \otimes \cdots \otimes R) \circ (\text{id}^{\otimes r} \otimes \triangle_s \otimes \text{id}^{\otimes t}) \circ \triangle_{r+1+t} \\
~&= (R \otimes \cdots \otimes \text{id} \otimes \cdots \otimes R \otimes (\sum_{j=1}^s \triangle_{s, [j]}) \circ R \otimes R \otimes \cdots \otimes R) \circ \triangle_{r+1+t} \\
~&= (\text{id}^{\otimes r} \otimes \sum_{j=1}^s \triangle_{s, [j]} \otimes \text{id}^{\otimes t}) \circ \triangle_{r+1+t, [\theta]} \\
~&= (\text{id}^{\otimes r} \otimes  \triangle_{s, R_{r+1} (r+1+t; 1, \ldots, s , \ldots, 1)[\theta]} \otimes \text{id}^{\otimes t}) \circ \triangle_{r+1+t, R_0 (r+1+t; 1, \ldots, s , \ldots, 1)[\theta]}.
\end{align*}
When $r+1 \leq \theta \leq r+s$, then
\begin{align*}
&(R \otimes \cdots \otimes \overbrace{\text{id}}^{\theta\text{-th place}} \otimes \cdots \otimes R) \circ (\text{id}^{\otimes r} \otimes \triangle_s \otimes \text{id}^{\otimes t}) \circ \triangle_{r+1+t} \\
~&= (R^{\otimes r} \otimes (R \otimes \cdots \otimes \text{id} \otimes \cdots \otimes R) \circ \triangle_s \otimes R^{\otimes t} ) \circ \triangle_{r+1+t} \\
~&= (R^{\otimes r} \otimes \triangle_{s, [\theta -r]} \otimes R^{\otimes t} ) \circ \triangle_{r+1+t} \\
~&= (\text{id}^{\otimes r} \otimes  \triangle_{s, [\theta -r]} \otimes \text{id}^{\otimes t}) \circ \triangle_{r+1+t, [r+1]} \\
~&= (\text{id}^{\otimes r} \otimes  \triangle_{s, R_{r+1} (r+1+t; 1, \ldots, s , \ldots, 1)[\theta]} \otimes \text{id}^{\otimes t}) \circ \triangle_{r+1+t, R_0 (r+1+t; 1, \ldots, s , \ldots, 1)[\theta]}.
\end{align*}
Similarly, if $r+s+1 \leq \theta \leq r+s+t = n$, then we also have
\begin{align*}
&(R \otimes \cdots \otimes \overbrace{\text{id}}^{\theta\text{-th place}} \otimes \cdots \otimes R) \circ (\text{id}^{\otimes r} \otimes \triangle_s \otimes \text{id}^{\otimes t}) \circ \triangle_{r+1+t} \\
~&= (\text{id}^{\otimes r} \otimes  \triangle_{s, R_{r+1} (r+1+t; 1, \ldots, s , \ldots, 1)[\theta]} \otimes \text{id}^{\otimes t}) \circ \triangle_{r+1+t, R_0 (r+1+t; 1, \ldots, s , \ldots, 1)[\theta]}.
\end{align*}
Hence the result follows from (\ref{ro-dend}).
\end{proof}

\section{Appendix}\label{sec-6}

In this appendix, we recall some basics on non-symmetric operads. See \cite{gers-voro, loday-val} for more details.\\

\noindent {\em Non-symmetric operads.}

\medskip

\begin{defn}
A non-symmetric operad in the category of vector spaces consist of a collection of vector spaces $\mathcal{O} = \{ \mathcal{O}(n)|~ n \geq 1 \}$ together with partial compositions
\begin{align*}
\bullet_i : \mathcal{O}(m) \otimes \mathcal{O}(n) \rightarrow \mathcal{O}(m+n-1), ~~~~ 1 \leq i \leq m
\end{align*}
satisfying the following identities
\begin{align*}
(f \bullet_i g) \bullet_{i+j-1} h = f \bullet_i (g \bullet_j h), \quad &\mbox{~~~ for } 1 \leq i \leq m, ~1 \leq j \leq n,\\
(f \bullet_i g) \bullet_{j+n-1} h = (f \bullet_j h) \bullet_i g, \quad  & \mbox{~~~ for } 1 \leq i < j \leq m,
\end{align*}
and there is an (identity) element $\mathrm{id} \in \mathcal{O}(1)$ such that 
$ f \bullet_i \mathrm{id} = f =\mathrm{id} \bullet_1 f,$ for all $f \in \mathcal{O}(n)$ and $1 \leq i \leq n$.
\end{defn}

A toy example of an operad is given by the endomorphism operad $\text{End}_A$ associated to a vector space $A$. One may also consider coendomorphism operad $\text{coEnd}_A$ associated to $A$. More generally, $\text{coEnd}_A (n) := \text{Hom} (A, A^{\otimes n})$, for $n \geq 1$, and the partial compositions are given by 
\begin{align*}f \bullet_i g = (\text{id} \otimes \cdots \otimes \underbrace{\text{g}}_{i\text{-th place}} \otimes \cdots \otimes \text{id}) \circ f.
\end{align*}

In an operad, one may also define a product $\bullet : \mathcal{O}(m) \otimes \mathcal{O}(n) \rightarrow \mathcal{O}(m+n-1)$ by
\begin{align*}
f \bullet g = \sum_{i=1}^m ~(-1)^{(i-1)(n-1)}~ f \bullet_i g, ~~~ f \in \mathcal{O}(m), g \in \mathcal{O}(n).
\end{align*}
The product $\bullet$ is not associative, but, they satisfy the following pre-Lie identities:
\begin{align}\label{pre-lie-iden}
(f \bullet g) \bullet h - f \bullet (g \bullet h) = (-1)^{(n-1)(p-1)} ((f \bullet h) \bullet g - f \bullet (h \bullet g)),
\end{align}
for $f \in \mathcal{O}(m), ~g \in \mathcal{O}(n)$ and $h \in \mathcal{O}(p).$
Therefore, there is a degree $-1$ graded Lie bracket on $\oplus_{n \geq 1} \mathcal{O}(n)$ by
\begin{align}\label{lie-brckt}
[f,g] := f \bullet g - (-1)^{(m-1)(n-1)} g \bullet f.
\end{align}

\begin{defn}
Let $(\mathcal{O}, \bullet_i, \mathrm{id})$ be an operad. A multiplication on it is given by an element $\pi \in \mathcal{O}(2)$ such that $\pi \bullet_1 \pi = \pi \bullet_2 \pi$, or, equivalently, 
$\pi \bullet \pi = 0$.
\end{defn}

A multiplication $\pi$ defines an associative product $\oplus_{n \geq 1} \mathcal{O}(n)$ by $f \cdot g = \pm~ (\pi \bullet_2 g) \bullet_1 f$ and a degree $+1$ differential $d_\pi : \mathcal{O}(n) \rightarrow \mathcal{O}(n+1)$ by $d_\pi f = \pi \bullet f - (-1)^{k-1} f \bullet \pi$. Then 
for any $f, g \in \mathcal{O}(2)$, it is easy to see that
\begin{align}\label{mul-circ}
\delta_\pi (f \bullet g) =  f \bullet \delta_{\pi} (g) - \delta_{\pi} (f) \bullet g ~+~ g \cdot f -~ f \cdot g.
\end{align}
It was shown in \cite{gers-voro} that the bracket $[~, ~]$ and the product $\cdot$ induces a Gerstenhaber algebra structure on the graded space of cohomology $H^\bullet (\mathcal{O}, d_\pi).$

\vspace{0.5cm}

\noindent {\bf Acknowledgements.} The author would like to thank Indian Institute of Technology (IIT), Kanpur for financial support.

%\mbox{ }\\

%\providecommand{\bysame}{\leavevmode\hbox to3em{\hrulefill}\thinspace}
%\providecommand{\MR}{\relax\ifhmode\unskip\space\fi MR }
% \MRhref is called by the amsart/book/proc definition of \MR.
% \providecommand{\MRhref}[2]{%
% \href{http://www.ams.org/mathscinet-getitem?mr=#1}{#2}
%}
%\providecommand{\href}[2]{#2}


\begin{thebibliography}{BFGM03}

\bibitem{aguiar} M. Aguiar, Pre-Poisson algebras, {\em Lett. Math. Phys.} 54 (2000), no. 4, 263-277.

\bibitem{bala} D. Balavoine, Deformations of algebras over a quadratic operad, {\em Operads: Proceedings of Renaissance Conferences (Hartford, CT/Luminy, 1995)}, 207–234, 
Contemp. Math., 202, {\em Amer. Math. Soc., Providence, RI,} 1997. 

\bibitem{das1} A. Das, Cohomology and deformations of dendriform algebras, and $\mathrm{Dend}_\infty$-algebras, arXiv:1903.11802, {\em submitted for publication}

\bibitem{das2} A. Das, Deformations of Loday type algebras and their morphisms,  arXiv:1904.12366, {\em J. Pure Appl. Algebra} to appear

\bibitem{das3} A. Das, Deformations of associative Rota-Baxter operators, 560 (2020) 144-180.

\bibitem{das-saha} A. Das and R. Saha, On equivariant dendriform algebras, {\em Colloq. Math.} to appear

\bibitem{fard-et} K. Ebrahimi-Fard, D. Manchon and F. Patras,
New identities in dendriform algebras,
{\em J. Algebra} 320 (2008), no. 2, 708-727. 

\bibitem{ebr-guo}
K. Ebrahimi-Fard and L. Guo, Rota-Baxter algebras and dendriform algebras, 
{\em J. Pure Appl. Algebra} 212 (2008), no. 2, 320-339. 

\bibitem{gers} M. Gerstenhaber, On the deformation of rings and algebras, {\em Ann. of Math.} (2) 79 (1964) 59-103.

\bibitem{gers-sch} M. Gerstenhaber and S. D. Schack, Algebras, bialgebras, quantum groups, and algebraic deformations, {\em Deformation theory and quantum groups with applications to mathematical physics (Amherst, MA, 1990)}, 51-92, 
Contemp. Math., 134, {\em Amer. Math. Soc., Providence, RI,} 1992. 

\bibitem{gers-voro} M. Gerstenhaber and A. A. Voronov, Homotopy $G$-algebras and moduli space operad, 
Internat. Math. Res. Notices 1995, no. 3, 141-153. 

\bibitem{jian-zhang} R.-Q. Jian and J. Zhang, Rota-Baxter coalgebras, arXiv preprint, \url{https://arxiv.org/abs/1409.3052}

\bibitem{loday} J.-L. Loday, Dialgebras, {\em Dialgebras and related operads,} 7-66, 
Lecture Notes in Math., 1763, {\em Springer, Berlin}, 2001. 

\bibitem{loday-val} J.-L. Loday and B. Vallette, Algebraic operads, Grundlehren der Mathematischen Wissenschaften, 346. {\em Springer, Heidelberg,} 2012. xxiv+634 pp. ISBN: 978-3-642-30361-6

\bibitem{ma-liu} T. Ma and L. Liu,
Rota-Baxter coalgebras and Rota-Baxter bialgebras, {\em Linear Multilinear Algebra} 64 (2016), no. 5, 968-979.

%\bibitem{maj-yau} A. Majumdar and D. Yau, Cohomology and duality for coalgebras over a quadratic operad, {\em J. Gen. Lie Theory Appl.} 3 (2009), no. 2, 131-148.

\bibitem{nij-ric} A. Nijenhuis and R. W. Richardson, Deformations of Lie algebra structures,
{\em J. Math. Mech.} 17 1967 89–105. 

\bibitem{ronco} M. Ronco, A Milnor-Moore theorem for dendriform Hopf algebras,
{\em C. R. Acad. Sci. Paris S\'{e}r. I Math.} 332 (2001), no. 2, 109-114. 

\bibitem{stas} J. Stasheff, Homotopy associativity of $H$-spaces I, {\em Trans. Amer. Math. Soc.} 108 (1963) 275-292.

\bibitem{voro}
Th. Voronov, Higher derived brackets and homotopy algebras,
{\em J. Pure Appl. Algebra} 202 (2005), no. 1-3, 133-153.

\bibitem{yau} D. Yau, Deformations of coalgebra morphisms, 
{\em J. Algebra} 307 (2007), no. 1, 106-115. 
\end{thebibliography}
\end{document}